\numberwithin{equation}{section}
\DeclareFontFamily{U}{mathb}{\hyphenchar\font45}
\DeclareFontShape{U}{mathb}{m}{n}{
      <5> <6> <7> <8> <9> <10> gen * mathb
      <10.95> mathb10 <12> <14.4> <17.28> <20.74> <24.88> mathb12
      }{}
\DeclareSymbolFont{mathb}{U}{mathb}{m}{n}
\DeclareMathSymbol{\righttoleftarrow}{3}{mathb}{"FD}
\theoremstyle{plain}
\newtheorem{prop}{Proposition}
\newtheorem{theo}[prop]{Theorem}
\newtheorem{coro}[prop]{Corollary}
\newtheorem{lemm}[prop]{Lemma}
\theoremstyle{definition}
\newtheorem{rema}[prop]{Remark}
\newtheorem{exam}[prop]{Example}
\def\lra{\longrightarrow}
\def\ra{\rightarrow}
\def\cC{{\mathcal C}}
\def\cL{{\mathcal L}}
\def\cM{{\mathcal M}}
\def\cP{{\mathcal P}}
\def\cV{{\mathcal V}}
\def\fD{{\mathfrak D}}
\def\fS{{\mathfrak S}}
\def\ocM{{\overline{\mathcal M}}}
\def\ocC{{\overline{\mathcal C}}}
\def\fS{{\mathfrak S}}
\def\bA{{\mathbb A}}
\def\bG{{\mathbb G}}
\def\bP{{\mathbb P}}
\def\bZ{{\mathbb Z}}
\def\bN{{\mathbb N}}
\def\bC{{\mathbb C}}
\def\rH{{\mathrm H}}
\def\rI{{\mathrm I}}
\def\diag{\mathrm{diag}}
\def\Cl{\mathrm{Cl}}
\def\Pic{\mathrm{Pic}}
\def\Mat{\mathrm{Mat}}
\def\Gal{\mathrm{Gal}}
\def\Aut{\mathrm{Aut}}
\def\Gr{\mathrm{Gr}}
\def\SL{\mathrm{SL}}
\def\GL{\mathrm{GL}}
\def\bPGL{\mathrm{PGL}}
\def\CGr{\mathrm{CGr}}
\def\Hom{\mathrm{Hom}}
\def\lim{\mathrm{lim}}
\newcommand{\bS}{\mathbb{S}}
\def\ocM{\overline{\mathcal M}}
\def\oL{\overline{L}}
\newcommand{\height}{\operatorname{ht}}
\begin{document}

\title[Rationality of forms of $\overline{\cM}_{0,n}$]{Rationality of forms of $\overline{\cM}_{0,n}$}

\author{Brendan Hassett}
\address{Department of Mathematics\\
Brown University
Box 1917 \\
151 Thayer Street,
Providence, RI 02912 \\
USA}
\email{brendan\underline{ }hassett@brown.edu}

\author{Yuri Tschinkel}
\address{
  Courant Institute,
  251 Mercer Street,
  New York, NY 10012, USA
}

\email{tschinkel@cims.nyu.edu}

\address{Simons Foundation\\
160 Fifth Avenue\\
New York, NY 10010\\
USA}

\author{Zhijia Zhang}
\email{zhijia.zhang@cims.nyu.edu}
\address{
  Courant Institute,
  251 Mercer Street,
  New York, NY 10012, USA
}

\date{\today}

\begin{abstract}
We study equivariant geometry and 
rationality of moduli spaces of points on the projective line, for twists associated with
permutations of the points. 
\end{abstract}

\maketitle

\section{Introduction}
\label{sect:intro}

In this note, we strengthen a theorem of Florence--Reichstein \cite{FR} concerning rationality of moduli spaces. They consider {\em forms} of $\ocM_{0,n}$, i.e., varieties over nonclosed fields $F$ which are isomorphic to 
the moduli space of $n$ points on $\bP^1$ over an algebraic closure of $F$. These forms are obtained by 
twisting via Galois actions permuting the points over $F$. 
The main results of \cite{FR} are: 
\begin{itemize}
\item if $n\ge 5$ is odd, and $F$ is infinite of characteristic $\neq 2$, then every form over $F$ is rational;
\item if $n\ge 6$ is even, and $F$ has nontrivial 2-torsion in its Brauer group and contains fourth roots of unity, then there exists a form $X$ of  $\ocM_{0,n}$ 
over $F$ such that $X$ is not retract rational over $F$. 
\end{itemize}
These were inspired by a classical theorem of Enriques, Manin,  and Swinnerton-Dyer concerning rationality of twists of  $\ocM_{0,5}$, a del Pezzo surface of degree 5, over any field $F$. The proof for $n\ge 5$ uses (a twisted form of) the Gelfand-MacPherson correspondence, and techniques developed in connection with Noether's problem for twisted forms of the groups in question. 

By \cite{FR}, every form
over an infinite field $F$ is unirational over $F$. 
It is known that every form of $\ocM_{0,6}$ over $\mathbb R$ is rational \cite[Proposition 2.9]{avilov}; see Corollary~\ref{coro:cyclic} for 
generalizations.

Here, we strengthen their conclusions in two directions: we prove rationality in 
several situations not addressed in \cite{FR}. 
On the other hand, we show failure of rationality via Galois
cohomology in instances not covered by \cite{FR}, e.g.,
where the Brauer group of $F$ is trivial.

An important ingredient throughout is
a theorem of \cite{Bruno}:
$$
\Aut(\ocM_{0,n})=\fS_n, \quad  n\ge 5,
$$
acting via permutations of the $n$ points on $\bP^1$. In particular, Galois twists of 
$\ocM_{0,n}$
factor through subgroups of $\fS_n$, and there is a close link between rationality of twists and linearizability of $G$-actions on $\ocM_{0,n}$; see \cite{DR} for a general discussion of such connections. In both situations, 
there is an action of a finite group on the geometric Picard group 
$$
\Pic(\ocM_{0,n}),$$
via a subgroup of $\fS_n$.

We present several stable rationality and linearizability results, including
Propositions~\ref{prop:oddcycle} and \ref{prop:yields}
(based on the Kapranov construction) and Theorem~\ref{thm:stable}
(using torsors and quotients).
Section~\ref{sect:ratconstructs} focuses on geometric constructions.  
One rationality construction uses Schubert calculus and
the geometry of Grassmannians; Theorem~\ref{theo:ratquot} 
extends results of \cite{FR}
to small fields 
(Corollary~\ref{coro:twistfinite}) and some point configurations in higher-dimensional projective spaces (Corollary~\ref{coro:HD}).
Another relies on fibration structures; see Theorem~\ref{theo:part}. 
We close with a comprehensive discussion of the $n=6$ case (Theorem~\ref{thm:segre-rat}).

For nonrationality/nonlinearizability, we focus on situations
where the twisted moduli spaces are toric via the Losev-Manin
construction \cite{LM}.
We utilize cohomological {\bf (H1)} and {\bf (SP)}-obstructions (see Section~\ref{sect:comp}): 
In the arithmetic context, the group is replaced by the absolute Galois group of the ground field $F$ and the Picard module by the geometric Picard module. 
We focus on {\em even} $n$:

\begin{theo}[Corollary~\ref{coro:linear} and Theorem~\ref{thm:biquad}]
\label{thm:main}
For every even $n\ge 6$ there exists a subgroup $G=C_2^2\subset \fS_n$ such that  
\begin{equation*}
\label{eqn:coho}
\rH^1(G, \Pic(\ocM_{0,n})) = \bZ/2. 
\end{equation*}
In particular, 
\begin{itemize}
\item for all subgroups of $\fS_n$ containing $G$, the
corresponding action is not stably linearizable, 
\item for all fields $F$ admitting a Galois extension $L/F$ with Galois group $\Gal(L/F)\simeq G$
there exists a form $X$ of $\ocM_{0,n}$ over $F$ such that $X$ is not retract rational over $F$. 
\end{itemize}
\end{theo}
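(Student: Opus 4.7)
The plan is to choose an explicit Klein four subgroup $G \subset \fS_n$ for which $H^1$ of the geometric Picard module equals $\bZ/2$, and then feed this into the \textbf{(H1)}-obstruction framework in both the geometric and arithmetic settings. For even $n \geq 6$, I would take
\begin{equation*}
G = \{e,\ (1\,2)(3\,4),\ (1\,3)(2\,4),\ (1\,4)(2\,3)\} \subset \fS_n,
\end{equation*}
the Klein four-group acting regularly on $\{1,2,3,4\}$ and trivially on $\{5,\ldots,n\}$. Since $G$ fixes $n - 4 \geq 2$ marked points, designating two of them as ``poles'' exhibits the $G$-twist of $\ocM_{0,n}$ as a $G$-equivariant iterated blow-up of the Losev--Manin toric variety $\overline L_{n-2}$; this is what makes the computation tractable and explains both the parity hypothesis on $n$ and the ``biquadratic'' label of the theorem (the regular action on four points corresponds to a biquadratic field extension).

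The heart of the argument is the cohomology computation. I would present $\Pic(\ocM_{0,n})$ as a $G$-module via the boundary divisors $\delta_I$ subject to Keel's relations. Because every $G$-orbit on the indexing set of unordered partitions has size $1$, $2$, or $4$, the free abelian group $P$ on boundary divisors is a permutation $G$-module. From the short exact sequence
\begin{equation*}
0 \to M \to P \to \Pic(\ocM_{0,n}) \to 0
\end{equation*}
and the vanishing $H^1(G,P) = 0$ valid for permutation modules, one has $H^1(G,\Pic(\ocM_{0,n})) \cong H^2(G,M)$. Using the Losev--Manin/Kapranov toric description, $M$ is identified with the character lattice of the toric part together with Kapranov exceptional contributions, and the computation reduces to a finite $\bF_2$-linear-algebra problem. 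I expect the nonzero class to arise from the mixed four-element orbit $\{\delta_{\{1,3\}}, \delta_{\{1,4\}}, \delta_{\{2,3\}}, \delta_{\{2,4\}}\}$ together with a Keel syzygy that prevents this summand from being stably permutation.

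Once the nonvanishing is in hand, the two bullets are formal. The \textbf{(H1)}-obstruction of Section~\ref{sect:comp} rules out stable linearizability of the $G$-action on $\ocM_{0,n}$, and by restriction the same holds for every intermediate $G'$ with $G \subseteq G' \subseteq \fS_n$, giving the first bullet. For the second, a Galois extension $L/F$ with $\Gal(L/F) \simeq G$ yields a twist $X/F$ of $\ocM_{0,n}$ whose geometric Picard module is $\Pic(\ocM_{0,n})$ with Galois action factoring through $G$ as above (using $\Aut(\ocM_{0,n}) = \fS_n$ of \cite{Bruno}), so
\begin{equation*}
H^1(\Gal(\bar F/F), \Pic(X_{\bar F})) \supseteq H^1(G, \Pic(\ocM_{0,n})) = \bZ/2,
\end{equation*}
and the Colliot-Th\'el\`ene--Sansuc criterion shows $X$ is not retract rational over $F$. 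The main obstacle is the explicit $H^1$ computation: the bookkeeping for Keel's relations under $G$ is intricate, and the cleanest route is to pass to the Losev--Manin toric model where $M$ and $P$ have clean combinatorial descriptions, after which the geometric and arithmetic consequences follow directly.
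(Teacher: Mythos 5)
There is a fatal problem with your choice of group. The Klein four\-{}group
$G=\{e,(1\,2)(3\,4),(1\,3)(2\,4),(1\,4)(2\,3)\}$ acting regularly on $\{1,2,3,4\}$ and trivially on $\{5,\dots,n\}$ fixes the marked point $n$ (indeed all of $5,\dots,n$), so $G$ is contained in the distinguished $\fS_{n-1}\subset\fS_n$. By the Kapranov construction (Proposition~\ref{prop:oddcycle}), the $G$-action is then linearizable: in the Kapranov basis $\{H,E_{i_1},\dots,E_{i_1,\dots,i_{n-4}}\}$ the class $H$ is fixed and the exceptional classes are permuted, so $\Pic(\ocM_{0,n})$ is a \emph{permutation} $G$-module and $\rH^1(G,\Pic(\ocM_{0,n}))=0$, not $\bZ/2$. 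No amount of bookkeeping with Keel's relations will produce a nonzero class for this subgroup. The adjective ``biquadratic'' in the theorem refers only to the Galois group being $C_2^2$; it does not force (and here must not be) the regular action of $C_2^2$ on a $4$-element orbit. The group actually used in the paper (Proposition~\ref{prop:cohomo}) is a $C_2^2$ all of whose orbits on $\{1,\dots,n\}$ have size two --- for $n=6$ it is $\langle(1\,2)(5\,6),(3\,4)(5\,6)\rangle$ --- and in particular it stabilizes no marked point; this is exactly what allows the $(-1)$-eigenvector $e_0=H-\Sigma$ in $Q^\sigma$ to survive and contribute $\bZ/2$.

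Two further, secondary issues: even granting a correct group, your argument does not actually produce the nonzero class (``I expect the nonzero class to arise from\dots'' is where the entire content of the proof lives); and the asserted isomorphism $\rH^1(G,\Pic(\ocM_{0,n}))\cong \rH^2(G,M)$ from the Keel presentation $0\to M\to P\to \Pic\to 0$ only gives an injection $\rH^1(G,\Pic)\hookrightarrow \rH^2(G,M)$ when $\rH^1(G,P)=0$; surjectivity would require controlling the map to $\rH^2(G,P)$. The paper avoids both difficulties by working instead with the Gelfand--MacPherson/Losev--Manin quotient $Q$ of $\Pic(\ocM_{0,n})$, reducing via inflation--restriction to an explicit eigenbasis computation for $\rH^1(\langle\tau\rangle,Q^\sigma)$.
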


Indeed, nonvanishing group cohomology is an obstruction to (stable) linearizability, see, e.g., \cite[Corollary 2.5.2.]{BogPro}. In the context of nonclosed fields, 
one can find a twist $X$ of $\ocM_{0,n}$ over $F$ so that the 
corresponding Galois action on the geometric Picard group of $X$ factors through the prescribed action of $G$. 
This yields nontrivial Galois cohomology, which in turn obstructs retract rationality of $X$ over $F$.  
In particular, our result applies to fields $F$ with {\em trivial} Brauer group, e.g., $F=\bC(t)$.

\

\noindent
{\bf Acknowledgments:} The first author was partially supported by Simons Foundation Award 546235 and NSF grant 1929284 and 
the second author was partially supported by NSF grant 2301983. 
We are grateful to Barry Mazur and Zinovy Reichstein for comments on
this paper and its results.

\section{$\fS_n$-equivariant geometry}
\label{sect:gen}

We recall some terminology:
Let $G$ be a finite group acting regularly on a
projective variety $X$. Assume the action is generically free. The action is {\em linearizable}
if $X$ is equivariantly birational to the projectivization $\bP(V)$ of a  linear
representation $V$ of $G$ on a vector space.
It is {\em stably linearizable} if $X\times \bP^r$
-- where $G$ acts trivially on the second factor --
is linearizable.

\subsection*{Kapranov blowup}
We make use of the Kapranov blowup realization
$$
\beta_n:  \ocM_{0,n} \to \bP^{n-3}, \quad n\ge 4,  
$$
where $\beta_n$ is an iterated blowup of 
$n-1$ general points on $\bP^{n-3}$, lines through pairs of points, etc., 
see, e.g., \cite[Section 3.1]{HT-eff}. Precisely, we
regard 
$$
\bP^{n-3} = \bP(k[\fS_{n-1}]/(1,\ldots,1)),
$$
so that the $\fS_{n-1}$-action is linear. 
Boundary divisors $D_I$ are labeled by partitions 
$$
[1,\ldots, n] = I\sqcup I^c, \quad |I|, |I^c|\ge 2. 
$$

Recall that 
the Picard group $\Pic(\ocM_{0,n})$ has rank $2^{n-1}-\binom{n}{2}-1$, and  
an explicit basis is given by
$$
\{ H, E_{i_1}, E_{i_1,i_2}, \ldots, E_{i_1,\ldots, i_{n-4}} \}, 
$$
where $H$ is the (pullback of the) hyperplane class on $\bP^{n-3}$,
and the other elements are (classes of) exceptional divisors from blowups of points, lines, etc. The boundary divisors $D_{I}$ expressed in this basis are
$$
D_{i_1,\ldots,i_k,n}=E_{i_1,\ldots,i_k},\quad \{i_1,\ldots,i_k\}\subset\{1,\ldots,n-1\}, \quad k\leq n-4,
$$
and
$$
[D_{i_1,\ldots,i_{n-3},n}]=L-E_{i_1}-E_{i_2}-\ldots-E_{i_1,\ldots,i_{n-4}}-E_{i_2,\ldots,i_{n-3}}.
$$
The $\fS_n$-action on $\Pic(\ocM_{0,n})$ is best understood in terms of the natural $\fS_n$-action on the boundary divisors via permutations of indices of $D_I$. 
In particular, there is a distinguished $\fS_{n-1}\subset \fS_n$ acting via permutation of indices on $E_i$, for $i\in \{ 1,\ldots, n-1\}$. 

\

The Kapranov construction has applications to linearizability:
\begin{prop}\label{prop:oddcycle}
Suppose that $G\subseteq \fS_{n-1}$ acts on $\ocM_{0,n}$
leaving the $n$th point invariant. Then the
action of $G$ is linearizable. 

For $n=2m+1$ and $G\subseteq \fS_{2m+1}$,
the $G$-action on $\ocM_{0,n}$ is stably linearizable.

More generally, for $G\subseteq \fS_n$ leaving 
an odd cycle invariant, the $G$-action on $\ocM_{0,n}$
is stably linearizable.  
\end{prop}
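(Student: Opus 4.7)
Part~1 follows immediately from the Kapranov construction: the birational map $\beta_n:\ocM_{0,n}\to\bP^{n-3}$ is $\fS_{n-1}$-equivariant, with $\fS_{n-1}$ acting linearly on $\bP^{n-3}=\bP(k[\fS_{n-1}]/(1,\ldots,1))$. Restricting $\beta_n$ to any subgroup $G\subseteq\fS_{n-1}$ fixing the $n$-th marked point immediately yields a $G$-equivariant birational map to the projectivization of a linear $G$-representation, establishing linearizability.

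For Parts~2 and~3, I observe that Part~2 is the special case of Part~3 with $S=\{1,\ldots,n\}$ (of odd cardinality $n=2m+1$), so it suffices to prove Part~3. Let $G\subseteq\fS_n$ have a $G$-invariant subset $S\subseteq\{1,\ldots,n\}$ of odd cardinality $|S|=2k+1$. My plan is to reduce to Part~1 by introducing an auxiliary $G$-fixed marked point.

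Consider the forgetful map $\pi:\ocM_{0,n+1}\to\ocM_{0,n}$. The natural $\fS_n$-action on $\ocM_{0,n+1}$ fixes the $(n+1)$-st point, so by Part~1 applied to $G\subseteq\fS_n\subseteq\fS_{n+1}$, there is a $G$-equivariant birational equivalence $\ocM_{0,n+1}\sim_{G\text{-bir}}\bP(V)$ for some linear $G$-representation $V$. The map $\pi$ is $G$-equivariant and generically a $\bP^1$-bundle; its $n$ sections $D_{i,n+1}$ are permuted by $\fS_n$, and the subset indexed by $i\in S$ is $G$-invariant. The central step is to show that, after stabilizing by a $\bP^r$ with trivial $G$-action, $\pi$ becomes $G$-equivariantly birational to the projection $\ocM_{0,n}\times\bP^{1+r}\to\ocM_{0,n}$ with trivial $G$-action on the fiber. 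Chaining with the linearization of $\ocM_{0,n+1}$ would then yield a stable linearization of $\ocM_{0,n}$.

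The crux of the argument, and the main obstacle, is the equivariant trivialization of $\pi$ using the odd parity of $|S|$. The $G$-invariant relative divisor $D_S=\sum_{i\in S}D_{i,n+1}$ has odd relative degree $|S|=2k+1$; combined with $k$ times the relative canonical class $K_\pi$ (of relative degree $-2$), this produces a $G$-invariant relative divisor class of degree~$1$. Writing $\pi=\bP(E)$ for a $G$-equivariant rank-$2$ bundle $E$ on $\ocM_{0,n}$, this degree-$1$ class corresponds, after suitable twisting of $E$ by a $G$-equivariant line bundle, to a $G$-equivariant line subbundle of $E$; the resulting short exact sequence of $G$-equivariant line bundles can be split after stabilization via the no-name lemma, producing the desired equivariant trivialization. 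I anticipate the careful bookkeeping of twists and the elimination of the residual character action on the fiber via stabilization to be the technically delicate part.
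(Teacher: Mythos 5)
Your proposal is correct and follows essentially the same route as the paper: Kapranov's $\fS_{n-1}$-equivariant blowup for the first assertion, and for the third the universal curve $\ocC_{0,n}=\ocM_{0,n+1}\to\ocM_{0,n}$ (linearizable by the first part), whose odd-degree invariant multisection forces it to be the projectivization of a rank-two $G$-equivariant vector bundle, trivialized birationally by the no-name lemma. Your extra detail on combining $D_S$ with $kK_\pi$ to produce a relative degree-one class is a correct fleshing-out of the paper's terser "it must therefore be the projectivization of a rank-two $G$-equivariant vector bundle," though the detour through a line subbundle is unnecessary once the no-name lemma is invoked.
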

\begin{proof}
The first assertion reflects the fact that 
the Kapranov morphism $\beta_n$ is $\fS_{n-1}$
invariant and the $\fS_{n-1}$-action on $\bP^{n-3}$
is linear. 
The second assertion is a special case of the third.
For the third statement, consider the universal curve
$$\ocC_{0,n}\rightarrow \ocM_{0,n}.
$$

\begin{lemm}
Let $G \subset \fS_n$ act on $\ocM_{0,n}$ by 
permutation of the marked points. Then there is 
a canonical lift of the action to the universal curve
$$\phi: \ocC_{0,n} \ra \ocM_{0,n}.$$
\end{lemm}
We prove the lemma. Interpreting $\ocC_{0,n}=\ocM_{0,n+1}$,
we have
$$
\Aut(\ocC_{0,n})=\fS_{n+1} \supset \fS_n 
\hookrightarrow \Aut(\ocM_{0,n}),
$$
with the last inclusion an equality when $n\ge 5$.
The induced action on $\Aut(\ocC_{0,n})$ is
equivariant under forgetting the $(n+1)$st point.

Returning to the Proposition, we assume that  $G$ leaves
an odd cycle invariant. Then the forgetting morphism
$\phi$ -- an \'etale $\bP^1$-bundle over $\cM_{0,n}$ --
admits a multisection of odd degree.  It must therefore
be the projectivization of a rank-two $G$-equivariant vector
bundle over $\cM_{0,n}$. 
However, we have already seen that
the $G$-action on $\ocC_{0,n}=\ocM_{0,n+1}$
is linearizable. We conclude then that $\ocM_{0,n}$
is stably linearizable.  
\end{proof}

A similar argument yields dividends for the Galois-theoretic question:

\begin{prop}
\label{prop:yields}
Let $L/F$ be a Galois extension with Galois group $\Gamma$. 
Fix a representation
$$\rho:\Gamma  \rightarrow \fS_n$$
and let ${}^{\rho}\ocM_{0,n}$ denote the corresponding
twist of $\ocM_{0,n}$ defined over $F$.
\begin{itemize} 
\item If $\rho$ factors through an $\fS_{n-1} \subset \fS_n$ then
${}^{\rho}\ocM_{0,n}$ is rational over $F$. 
\item 
If $n$ is odd then $\bP^1 \times{}^{\rho}\ocM_{0,n}$
is rational. The same holds if $\rho$ leaves an odd
cycle invariant.
\end{itemize}
\end{prop}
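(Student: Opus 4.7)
My plan is to mirror the argument of Proposition~\ref{prop:oddcycle} in the Galois-theoretic setting, using its two central ingredients: $\fS_{n-1}$-equivariance of the Kapranov birational morphism for the first bullet, and the odd-degree multisection criterion for the universal curve (combined with the identification $\ocC_{0,n}=\ocM_{0,n+1}$) for the second. For the first bullet, the Kapranov morphism $\beta_n: \ocM_{0,n}\to \bP^{n-3}$ is birational and $\fS_{n-1}$-equivariant, with $\fS_{n-1}$ acting linearly on $\bP^{n-3}=\bP(V)$ for the quotient permutation representation $V=k[\fS_{n-1}]/(1,\ldots,1)$. Galois-twisting by $\rho$ (which by hypothesis factors through $\fS_{n-1}$) yields a birational $F$-morphism ${}^\rho\ocM_{0,n}\to {}^\rho\bP(V)$; since $V$ is a permutation module and $\rho$ lifts to $\GL(V)$ rather than only to $\bPGL(V)$, Hilbert~90 gives $\rH^1(\Gamma,\GL(V))=0$, so ${}^\rho V$ is an $F$-vector space and ${}^\rho\bP(V)\simeq \bP^{n-3}_F$. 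Hence ${}^\rho\ocM_{0,n}$ is rational over $F$.

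For the second bullet, the case ``$n$ odd'' reduces to ``$\rho$ leaves an odd cycle invariant'' by pigeonhole on orbit lengths: an action on an odd-cardinality set must admit at least one orbit of odd length. Let $C\subset \{1,\ldots,n\}$ be a $\rho(\Gamma)$-orbit of odd size $k$. The corresponding $k$ marked sections of $\ocC_{0,n}\to \ocM_{0,n}$ form a $\Gamma$-invariant subset, descending to a degree-$k$ multisection of
\[
{}^\rho\ocC_{0,n}\longrightarrow {}^\rho\ocM_{0,n}
\]
over $F$. Over the open part $\cM_{0,n}$ the fibration is an \'etale $\bP^1$-bundle, so the twisted fibration has a 2-torsion relative Brauer class. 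An odd-degree multisection annihilates this class via the norm map (by the projection formula together with triviality of the pullback along a section), so the restricted fibration is the projectivization of a rank-two $F$-vector bundle, and ${}^\rho\ocC_{0,n}$ is $F$-birational to $\bP^1\times {}^\rho\ocM_{0,n}$.

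To conclude, interpret $\ocC_{0,n}=\ocM_{0,n+1}$: the lifted representation $\rho':\Gamma\to \fS_{n+1}$ factors through the stabilizer $\fS_n\subset \fS_{n+1}$ of the $(n+1)$st point, so the first bullet applied to $\rho'$ gives rationality of ${}^{\rho'}\ocM_{0,n+1}={}^\rho\ocC_{0,n}$ over $F$. Combined with the birational equivalence above, this yields $F$-rationality of $\bP^1\times {}^\rho\ocM_{0,n}$. The main obstacle I anticipate is the Brauer-class step: in the equivariant setting of Proposition~\ref{prop:oddcycle} the ``odd-degree multisection implies projectivization of a rank-two bundle'' fact is invoked essentially by reference, but in the arithmetic setting one should verify explicitly that the norm annihilates the relative Brauer class over $F$ (rather than only after base change to $L$), using that the multisection is defined over $F$ and that the bundle becomes trivial upon pullback to it.
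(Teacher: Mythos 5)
Your proposal is correct and follows essentially the same route as the paper: the first bullet is exactly the paper's descent of the $\fS_{n-1}$-equivariant Kapranov map (the paper leaves the Hilbert~90 triviality of the twisted $\bP(V)$ implicit), and the second bullet matches the paper's argument that ${}^{\rho}\ocC_{0,n}={}^{\rho}\ocM_{0,n+1}$ is rational by the first bullet while the forgetful map is generically a $\bP^1$-bundle because its generic fiber is a conic carrying a Galois-invariant zero-cycle of odd degree. Your Brauer-class/norm formulation of that last step is just a repackaging of the paper's observation that an odd-degree cycle splits the conic.
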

This gives a weaker version of \cite[Theorem~1.2]{FR}; however, our statement is valid over a finite field as well. See Remark~\ref{rema:odd+odd} below for 
a related result.

\begin{proof}
The Kapranov morphism $\beta:\ocM_{0,n} \rightarrow \bP^{n-3}$
is equivariant for $\fS_{n-1}$, which acts linearly on the 
target. Thus it descends to
$${ }^{\rho}\ocM_{0,n} \stackrel{\sim}{\rightarrow} \bP^{n-3}$$
over $F$, proving rationality.
For the second assertion, the Kapranov construction yields
$${}^{\rho}\ocC_{0,2m+1} \stackrel{\sim}{\rightarrow} \bP^{2m-1};$$
moreover 
$${}^{\rho}\ocC_{0,2m+1} \rightarrow {}^{\rho}\ocM_{0,2m+1}$$ 
is a $\bP^1$-bundle over a Zariski open
subspace of the base. (The generic fiber is a smooth genus zero curve with a cycle of odd degree.) In particular, 
$\bP^1 \times {}^{\rho}\ocM_{0,2m+1}$ is rational over $F$.
\end{proof}

\begin{exam}
Let $\fS_n$ act on $\ocM_{0,n}$, for $n\ge 5$. This 
action is not linearizable since $\fS_n$ does not act linearly and generically freely on $\bP^{n-3}$. Indeed, the smallest
faithful representation of $\fS_n$ has dimension $n-1$. When $n=p$ is a prime, then even the action of the Frobenius subgroup 
$\mathfrak F_p=\mathrm{Aff}_1(\mathbb F_p)\subset \fS_p$ 
is not linearizable, for the same reason. 
\end{exam}

\subsection*{The Losev-Manin construction}
This construction \cite{LM}, \cite[Section~6.4]{HassettWeights}
is a distinguished factorization
$$
\beta_n:\ocM_{0,n} \rightarrow \oL_n \rightarrow \bP^{n-3},
$$
where we blow up linear subspaces spanned by just
$(n-2)$ points in linear general position.
(Note that our indexing of $\oL_n$ differs from \cite{LM}.)
The first arrow contracts the boundary divisors 
$$D_{i_1,\ldots, i_k, (n-1), n}, \{i_1,\ldots,i_k\} \subset \{1,\ldots,n-2\}, \quad 
k \le n-5,
$$
by allowing points indexed by 
$$\{1,\ldots,n-2\} \setminus \{i_1,\ldots,i_k\}$$
to coincide.

We record some properties:
\begin{itemize}
\item{$\oL_n$ is toric \cite[Section 2.6]{LM};}
\item{the Losev-Manin construction is equivariant under
$\fS_{n-2}\times \fS_2 \subset \fS_n$, realized as permutations of
$\{1,\ldots,n-2\}$ and $\{n-1,n\}$ \cite[Theorem 2.5(b)]{LM}.}
\end{itemize}
The constructions of Losev-Manin give an explicit realization
of the torus $\mathsf T$ and its character module 
${\mathfrak X}^*({\mathsf T})$. Let $P$ denote the permutation
module for $\fS_{n-2}$ associated with the first $n-2$ letters 
and $L$ the non-trivial rank-one module for $\fS_2$ corresponding to $n-1$ and $n$. We regard these as modules for $\fS_{n-2}\times \fS_2$. Consider the short exact sequence
$$0 \rightarrow P_0 \rightarrow P \rightarrow \bZ \rightarrow 0$$
associated with summing over the $n-2$ letters. Then
we have 
\begin{equation}\label{eq:LMchar}
\mathfrak{X}^*(\mathsf{T}) = L \otimes P_0.
\end{equation}
Indeed, we may describe the open torus orbit in $\oL_n$ in geometric terms:
We identify the points $n-1$ and $n$ as $0$ and
$\infty$ and the first $n-2$ points as elements of
$$\Hom(P,\bP^1 \setminus \{0,\infty\})=\Hom(P,{\mathsf T}_L),$$
where ${\mathsf T}_L$ is the rank-one torus associated with $L$.
To get moduli,  we quotient out by the diagonal action of ${\mathsf T}_L$.

We record one last observation: Consider the Kapranov
blowups associated with points $n-1$ and $n$:
$$\beta_n[n-1],\beta_n[n]: \ocM_{0,n} \rightarrow \bP^{n-3}.$$
These two maps are related by an elementary Cremona transformation
$$\mathrm{Cr}:\bP^{n-3} \stackrel{\sim}{\dashrightarrow} \bP^{n-3}$$
associated with the points indexed by $\{1,\ldots,n-2\}$.
This is equivariant for the $\mathsf{T}$-actions and
we obtain a birational contraction
$$\oL_n \rightarrow \operatorname{Graph}(\mathrm{Cr}).$$

We summarize this as follows:
\begin{prop} \label{prop:twopointtoric}
Consider a twist of $\ocM_{0,n}$ associated with a subgroup
of $\fS_n$ leaving a pair of points invariant. 
This variety is necessarily toric, realized as a twist
of the Losev-Manin space.
\end{prop}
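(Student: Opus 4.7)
The plan is to use the Losev--Manin morphism as a $G$-equivariant birational link to a toric model, and then to apply Galois descent.

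After relabeling the marked points I would take the $G$-invariant pair to be $\{n-1,n\}$, whose stabilizer in $\fS_n$ is exactly $\fS_{n-2}\times\fS_2$; thus $G\subseteq \fS_{n-2}\times\fS_2$. The Losev--Manin morphism $\ocM_{0,n}\to \oL_n$ is birational and $\fS_{n-2}\times\fS_2$-equivariant, as recalled just before the proposition. Hence it is $G$-equivariant, and Galois descent produces an $F$-birational morphism
\[
{}^{\rho}\ocM_{0,n}\longrightarrow {}^{\rho}\oL_n.
\]

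The core step is to check that the $\fS_{n-2}\times\fS_2$-action on $\oL_n$ lies in the normalizer of the open torus $\mathsf{T}$ inside $\Aut(\oL_n)$. For this I would use the geometric description of the open torus orbit recalled in the excerpt: the $\fS_{n-2}$-factor acts by permuting the coordinates of $\Hom(P,\mathsf{T}_L)$, while the transposition $(n-1,n)$ acts as inversion on $\mathsf{T}_L$, since it swaps the marked points $0$ and $\infty$. Both operations preserve $\mathsf{T}$ setwise, and the induced representation on the character module $\mathfrak{X}^*(\mathsf{T})$ is precisely $L\otimes P_0$ as in~\eqref{eq:LMchar}.

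Granted this normalization, the twisted torus ${}^{\rho}\mathsf{T}$ is an $F$-torus, and ${}^{\rho}\oL_n$ acquires an open dense ${}^{\rho}\mathsf{T}$-orbit, so it is a toric $F$-variety. Combined with the $F$-birational morphism of the previous step, ${}^{\rho}\ocM_{0,n}$ is $F$-birational to this toric variety, which is by construction a twist of the Losev--Manin space. The only substantive point is the normalization claim in the core step, and it is immediate from the modular description of the open orbit as parametrizing unordered points on the marked pair $(\bP^1;0,\infty)$, a description that is visibly intrinsic to the distinguished pair of indices $\{n-1,n\}$ and hence equivariant for any $G$ preserving that pair.
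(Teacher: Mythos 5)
Your proposal is correct and follows essentially the same route as the paper: the authors state the proposition as a summary of the immediately preceding discussion, namely that $\oL_n$ is toric, that the Losev--Manin contraction $\ocM_{0,n}\to\oL_n$ is equivariant for the stabilizer $\fS_{n-2}\times\fS_2$ of the distinguished pair, and that the open torus orbit has the intrinsic modular description via $\Hom(P,\mathsf{T}_L)$ with character module $L\otimes P_0$. You merely make explicit the normalization-of-the-torus and descent steps that the paper leaves implicit.
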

This applies in both equivariant and Galois-theoretic situations.

\subsection*{The Gelfand-MacPherson correspondence}
Our main source is Kapranov \cite{kapranov}. 

Let $\Mat(2,n)$ denote the $2\times n$
matrices. The group $\GL_2$ acts via 
multiplication from the left 
$$
\mathrm A\cdot \mathrm M \mapsto \mathrm A\mathrm M
$$
and the
torus $\mathsf T=\bG_m^n$ acts via multiplication from
the right
$$\mathrm M\cdot \mathsf T \mapsto  \mathrm M\mathsf T, \quad \mathsf T=\operatorname{diag}(t_1,\ldots,t_n).$$
Considering the action by the product $\GL_2 \times \bG_m^n$, with the elements
$$
\left( t^{-1}\, \rI_2, \diag(t,t,\ldots,t)\right)
$$
in the kernel, we obtain a faithful action
of the quotient group 
$$
(\GL_2 \times \bG_m^n)/\bG_m.
$$

We have an exact sequence
$$1 \rightarrow \mu_2 \rightarrow
\SL_2 \times \bG_m^n \rightarrow 
(\GL_2 \times \bG_m^n)/\bG_m \rightarrow 1,
$$
where 
$$
\mu_2= \left( -\rI_2,\diag(-1,-1,\ldots,-1)\right).
$$
The invariant theory quotient is
$$ \SL_2 \backslash \Mat(2,n) = \CGr(2,n),$$
the cone over the Grassmannian $\Gr(2,n)$ in its
Pl\"ucker imbedding.  
The residual action of $\bG_m^n$ on this
cone has generic stabilizer $\mu_2$; the
action on the Grassmannian has generic stabilizer
$\bG_m=\diag(t,t,\ldots,t).$
On the other hand, the geometric invariant
theory quotient 
$$\Mat(2,n) /\!/ \bG_m, \quad 
\bG_m=\diag(t,t,\ldots,t)$$
yields $(\bP^1)^n$ with factors induced by
the columns of the matrix.  
The residual $\SL_2$ acts on this product
with the distinguished linearization introduced above, which is $\fS_n$-symmetric.
Again, this action fails to be faithful, 
as $\mu_2 \subset \SL_2$ acts trivially.

\

The Gelfand-MacPherson construction yields isomorphisms
\begin{equation} \label{KapIso}
\left(\CGr(2,n) \setminus \{0\}\right) / \bG_m^n \stackrel{\sim}{\lra}
\SL_2 \backslash \! \backslash (\bP^1)^n,
\end{equation}
where both sides are interpreted as GIT quotients
\cite[2.4.7]{kapranov}. Note that we have
numerous choices for how to linearize the actions
on the left- and right-hand sides, reflecting
linearizations of the torus action and ample
line bundles on the product; Kapranov's result
makes clear how to identify these choices.
Let $X_n$ denote the quotient
arising from the $\fS_n$-symmetric linearization.

Recall that the stable and strictly semistable loci
on $(\bP^1)^n$ are easily identified 
\begin{equation}
(p_1,\ldots,p_n) \text{ stable if there is
no point with multiplicity }\ge \frac{n}{2} \label{eqn:stab}.
\end{equation}
It is semistable
if all points
have multiplicity $\le \frac{n}{2}$.
For odd $n$, stable and semistable coincide; for 
even $n=2m$, collections of points where $m$
indices coincide are strictly stable, with closed
orbits consisting of collections where
$$p_{i_1}=\cdots = p_{i_m}, \quad p_{i_{m+1}}=\cdots= p_{i_{2m}}, \quad \{i_1,\ldots,i_{2m}\}=\{1,\ldots,2m\}.
$$
In particular, $X_{2m},m\ge 3$ has $\frac{1}{2}\binom{2m}{m}$ distinguished singular points over which the orbits are identified.  


The stable loci on the Grassmannian $\Gr(2,n)$ for the
action of $\bG_m^n \cap \SL_n$ may be described 
as well:
Choose a basis diagonalizing the torus action and let 
$(A_{ij}), 1\le i<j\le n$ denote the associated Pl\"ucker
coordinates. The point
$(A_{ij})$ is stable if there are
\begin{enumerate}
\item{no index $i$ with $A_{ij}=0$ for every $j$; and}
\item{no subset $I \subset \{1,\ldots,n\}$
with $|I|\ge \frac{n}{2}$ and $A_{ij}=0$ for all $i,j \in I$.}
\end{enumerate}

These descriptions yield an $\fS_n$-equivariant stratified
blowup \cite[0.4.3,4.1.8]{kapranov}
$$
\beta:\ocM_{0,n} \rightarrow X_n.
$$
This blows down all the boundary divisors $D_I$
except those where $|I|$ or $|I^c|=2$. The divisors $D_I$ with $2|I|=n$ are
collapsed to the distinguished singular points
$\Sigma \subset X_{2m}$ where $m=|I|$ and $n=2m$.

\

The Gelfand-MacPherson construction is a powerful
tool for computing class groups.
The induced homomorphism 
\begin{equation} \label{betaclass}
\beta_*:\Pic(\ocM_{0,n})=\Cl(\ocM_{0,n})
\rightarrow \Cl(X_n)
\end{equation}
is surjective because $\beta$ is a fibration
away from the distinguished singular points.
Thus we get an exact sequence
\begin{equation}
\label{eqn:NQ} 
0\rightarrow N \rightarrow M \rightarrow
Q \rightarrow 0,
\end{equation}
where 
$$
N=\ker(\beta_*), \quad M=\Pic(\ocM_{0,n}).
$$
In particular, $N$ is generated by the 
$D_I$ where $|I|,|I^c| \neq 2$. We can easily 
compute $Q$ is well.  Write 
$$
\mathfrak{X}^*(\bG_m^n) = \bZ g_1 + \cdots + \bZ g_n,
$$
so the quotient acting faithfully on the $\CGr(2,n)$
has characters 
$$\{\sum a_i g_i: a_i \in \bZ, \sum a_i \equiv 0 \pmod 2\}.$$
These give rise to line bundles on $X_n \setminus \Sigma$
and divisor classes on the full space.  Thus
we deduce that
$$Q \subset \bZ[\fS_n/\fS_{n-1}]$$
as an index-two subgroup. Note that the element
$g_{i_1}+g_{i_2}, i_1\neq i_2$ corresponds to the boundary
divisor $D_{i_1i_2}$; indeed, this locus is cut out by the $2\times 2$ determinant on 
$\bP^1_{i_1}\times \bP^1_{i_2}$.
Since $Q$ is an index-two subgroup of a permutation
module, we have
\begin{equation}
\label{QH1}
\rH^1(G,Q) = 0 \text{ or } \bZ/2\bZ 
\quad \text{  and  } \quad
\rH^1(G,M) = 0 \text{ or } \bZ/2\bZ.
\end{equation}

When $n$ is odd, i.e., $n=2m+1$, then 
$X_{2n+1}$ is nonsingular, 
$$\Pic(X_{2m+1})=\Cl(X_{2m+1}),$$
and $\beta$ is the iteration of a sequence of blowups along smooth disjoint centers. 
Precisely, we blow up the strata where $m$ points coincide, then where $m-1$ points coincide, etc.~(see \cite[\S 8]{HassettWeights}); this is naturally equivariant
under the $\fS_{2m+1}$-action. By the blowup formula \cite[Prop.~6.7]{Fulton},
we have
$$\Pic(\ocM_{0,2m+1}) = \Pic(X_{2m+1})
\oplus \{\text{free group on the exceptional divisors}\}.$$
We summarize this in algebraic terms: 
\begin{prop}
\label{prop:stable}
For odd $n=2m+1$, the exact sequence 
(\ref{eqn:NQ}) splits $\fS_{2m+1}$-equivariantly:
$$M \simeq N \oplus Q.$$
\end{prop}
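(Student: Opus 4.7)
The plan is to identify the decomposition
$$\Pic(\ocM_{0,2m+1}) \;=\; \beta^*\Pic(X_{2m+1}) \;\oplus\; \bigoplus_{E} \bZ\,[E]$$
— which is anticipated in the paragraph preceding the proposition via Fulton's blowup formula — with the desired splitting of \eqref{eqn:NQ}, by showing that both summands are $\fS_{2m+1}$-stable and match $Q$ and $N$ respectively.

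First I would verify that $X_{2m+1}$ is smooth. By \eqref{eqn:stab}, when $n=2m+1$ is odd, stable and semistable points of $(\bP^1)^n$ coincide, so the GIT quotient has no distinguished singular points, and $\Pic(X_{2m+1}) = \Cl(X_{2m+1})$. Next I would use the iterated blowup factorization of $\beta$ recalled just before the proposition (following \cite[\S 8]{HassettWeights}): one blows up, in order, the strata where $m$, then $m-1$, \ldots, then $2$ marked points coincide. At each stage the centers are smooth, pairwise disjoint, and $\fS_{2m+1}$-stable, since they are cut out by coincidence conditions among the marked points. Consequently Fulton's blowup formula applies equivariantly and yields an $\fS_{2m+1}$-equivariant split short exact sequence of Picard groups at every stage. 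Iterating produces the displayed decomposition as a direct sum of $\fS_{2m+1}$-modules.

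To match this with \eqref{eqn:NQ}, I would use that $\beta_*\circ\beta^*=\operatorname{id}$ on $\Pic(X_{2m+1})$, since $\beta$ is proper and birational between smooth varieties, so that exceptional divisors push forward to zero. Thus $\beta^*$ is an $\fS_{2m+1}$-equivariant section of $\beta_*\colon M\twoheadrightarrow Q$, and its image is a $\fS_{2m+1}$-stable complement to $\ker(\beta_*)=N$. Finally, the exceptional divisors of the successive blowups are precisely the boundary divisors $D_I$ with $|I|,|I^c|\neq 2$, i.e., exactly those contracted by $\beta$; these generate $N$ by definition, so the second summand is identified with $N$.

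The main obstacle is confirming that the factorization of $\beta$ as an iterated blowup along smooth, $\fS_{2m+1}$-invariant centers really does go through — that is, verifying that the strata of coinciding points remain smooth, disjoint, and equivariantly well-behaved after each blowup, so that Fulton's formula yields a canonical equivariant splitting at every step. This is the content of \cite[\S 8]{HassettWeights}; granted that input, the rest of the argument is bookkeeping: identifying exceptional divisors with the $D_I$ generating $N$ and identifying $\beta^*\Pic(X_{2m+1})$ with $Q$.
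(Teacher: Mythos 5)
Your proposal is correct and follows essentially the same route as the paper: smoothness of $X_{2m+1}$ from the coincidence of stable and semistable loci for odd $n$, the factorization of $\beta$ as an iterated equivariant blowup along smooth disjoint coincidence strata (citing \cite[\S 8]{HassettWeights}), Fulton's blowup formula, and the identification of the exceptional divisors with the generators of $N$ and of $\Pic(X_{2m+1})$ with $Q$. The only quibble is that the last blowup center is where \emph{three} points coincide, not two, since the locus where two points coincide is already a boundary divisor $D_I$ with $|I|=2$ and is not contracted by $\beta$.
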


On the other hand, for $n$ even, e.g., $n=6$, there are examples of $G\subset \fS_n$ such that the sequence does not split equivariantly, since in those cases $\rH^1(G,Q)\neq 0$ while $\rH^1(G,M)=0$
(see Example~\ref{exam:n=6}).

\

We return to the isomorphism (\ref{KapIso}) over
nonclosed fields.
Up to this point, we have been working with
schemes but this is compatible with the
$\mu_2$-gerbe structure over the dense open
subset where this is the full stabilizer.
When $n=2m$ the stabilizers may be larger,
e.g., where the sequence in $(\bP^1)^{2m}$ 
consists of $m$ copies of a pair of points conjugate over a quadratic extension. 
In the cone over the Grassmannian, 
$2\binom{m}{2}=m^2-m$ coordinates vanish and
the $m^2$ remaining coordinates are equal
to the determinant of the conjugate pair.  

We can apply the same analysis to nonsplit 
actions. This includes working over nonclosed
fields, where the $n$ points are a Galois
orbit, or in the equivariant context, where the $n$ points are invariant under the action
of a finite group.  In the former situation,
over a ground field $F$ of characteristic zero,
let $E/F$ be an \'etale algebra of degree $n$
classified by a representation of the
Galois group $\Gamma_F \rightarrow \fS_n$.
We replace the group $(\GL_2 \times \bG_m^n)/\bG_m$ with 
$(\GL_2 \times R_{E/F}\bG_m)/\bG_m$
and $(\bP^1)^n$ with $R_{E/F} \bP^1$
(see \cite[\S 4]{FR}). Note however
that twisting $\Mat(2,n)=\bA^{2n}$ yields
a variety isomorphic to $\bA^{2n}$, albeit
with an action of a nonsplit torus.  

The $\mu_2$-gerbe has an explicit geometric 
interpretation along $\cM_{0,n}$: 
It is encoded by the universal family
$$\phi: \cC_{0,n} \ra \cM_{0,n},$$
a conic fibration, in general.  

\section{Rationality constructions}
\label{sect:ratconstructs}

In this section, we work over an arbitrary field $F$, and we let $\Gamma$ be the absolute Galois group of $F$.  

\subsection*{Schubert calculus background}
Our reference is \cite{Kly85}.

Consider the Grassmannian $\Gr=\Gr(p,p+q)$ of $p$-dimensional subspaces
of a vector space of dimension $p+q$.  The maximal torus
$\mathsf T=\bG_m^{p+q}$ acts diagonally on the vector space. 
Let $X$ be a generic orbit in $\Gr$.  

We set combinatorial notation: Consider shuffles of 
$\{1,\ldots,p+q\}$
$$I=\{i_1< \cdots < i_p \}, \quad J= \{j_1< \cdots < j_q\}.$$
For each such shuffle, record the pairs 
$(k,\ell), k=1,\ldots,p, \ell=1,\ldots,q$, such that 
$i_k>j_{\ell}$.
Write 
$$\lambda_{p+1-k} = \# \{\ell: j_{\ell} < i_k\}$$
and note that 
$$q\ge \lambda_1 \ge \cdots \ge \lambda_p.$$
Write $\lambda=(\lambda_1,\ldots,\lambda_p)$ 
and use the same notation for the associated Young diagram,
which fits into a $p\times q$ rectangle.  
The {\em height} $\height(\lambda)$ is the number of indices $i$
with $\lambda_i>0$.
Set $|\lambda|=\lambda_1+\cdots+\lambda_p$
and let $\sigma_{\lambda}$ denote the associated Schubert
cycle on $\Gr$, a class in $\rH^{2|\lambda|}(\Gr,\bZ)$.

We recall dimension formulae for representations.
Let $V$ be an $n$-dimensional vector space
and $\lambda=(\lambda_1,\ldots,\lambda_n)$ 
a partition of $|\lambda|$ as above;
in particular, $n \ge \height(\lambda)$.
The Schur functor $\bS_{\lambda}(V)$ is
a representation of $\SL(V)$ with dimension
\cite[Theorem 6.3, Exercise 6.4]{FulHar}:
\begin{align*}
d_n(\lambda):=\dim \bS_{\lambda}(V) &= \prod_{1\le i<j\le n} 
\frac{\lambda_i - \lambda_j + j-i}{j-i} \\
& = \prod_{(a,b)} \frac{n-a+b}{h_{ab}},
\end{align*}
where $a=1,\ldots,n$ labels the rows of $\lambda$
(from top to bottom), $b$ labels the columns
(from left to right), and $h_{ab}$ labels the ``hook length''.
This is defined as the
number of boxes immediately below and to the right 
of a given box, including the box. 
For $n<\height(\lambda)$ we set $d_n(\lambda)=0$. 

For example, when $\lambda = (\lambda_1,\lambda_2,0,\ldots)$ and
$n\ge 2$,
\begin{align*}
&d_n(\lambda_1,\lambda_2)=  \\
&\frac{(n-1+1)\cdots(n-1+\lambda_1)}{1 \cdots (\lambda_1-\lambda_2)(\lambda_1-\lambda_2+2)\cdots(\lambda_1+1)}
\frac{(n-2+1)\cdots (n-2+\lambda_2)}{1 \cdots \lambda_2} \\
& \\
&\hskip 1.7cm = \binom{n-1+\lambda_1}{\lambda_1}\binom{n-2+\lambda_2}{\lambda_2}
\frac{\lambda_1-\lambda_2+1}{\lambda_1+1}.
\end{align*}
For instance,
$$d_n(2,1)=\frac{(n+1)n(n-1)}{3}, \quad n\ge 1.$$

Another combinatorial quantity is
$$m_k(\lambda) := \sum_{i=0}^k (-1)^i \binom{|\lambda|+1}{i}d_{k-i}(\lambda).$$
If $\lambda$ has height $k$ then $m_k(\lambda)=d_k(\lambda)$, 
as the terms in the sum with $i>0$ are zero.

We record a fact that we will use repeatedly in examples: 
\begin{prop}
Fix an integer $d\ge 0$. 
If $f(x)$ is a polynomial of degree $\le d$ then
the $(d+1)$th iterated difference
$$\sum_{i=0}^{d+1} (-1)^i \binom{d+1}{i}f(x-i)=0.$$
\end{prop}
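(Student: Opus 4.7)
The plan is to prove the identity using the backward difference operator
$$(\Delta f)(x) := f(x) - f(x-1),$$
which acts linearly on polynomials.

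First, I would verify by induction on $k\ge 0$ that
$$(\Delta^{k} f)(x) = \sum_{i=0}^{k}(-1)^{i}\binom{k}{i}f(x-i).$$
The base case $k=0$ is immediate. For the inductive step, I write
$(\Delta^{k+1}f)(x) = (\Delta^{k}f)(x) - (\Delta^{k}f)(x-1)$, expand both terms via the inductive hypothesis, reindex the second sum by $j=i+1$, and combine coefficients using Pascal's identity $\binom{k}{i}+\binom{k}{i-1}=\binom{k+1}{i}$. This reduces the proposition to showing that $\Delta^{d+1} f \equiv 0$ whenever $\deg f \le d$.

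Next I would show that $\Delta$ sends polynomials of degree $\le k$ into polynomials of degree $\le k-1$ (for $k\ge 1$) and annihilates constants. By linearity it suffices to check this on the monomial $f(x)=x^{k}$: expanding $(x-1)^{k}$ by the binomial theorem shows that $x^{k}-(x-1)^{k}$ has degree exactly $k-1$ with leading coefficient $k$ when $k\ge 1$, and vanishes when $k=0$. Iterating, $\Delta^{d+1}$ applied to any polynomial of degree $\le d$ produces the zero polynomial; substituting back into the closed form above gives the desired identity.

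The argument is entirely formal and I do not expect any genuine obstacle. The only point requiring care is matching the sign convention of the difference operator to the signs in the statement, which is why I chose the backward difference $(\Delta f)(x)=f(x)-f(x-1)$ rather than the forward version; with this choice the bookkeeping in the inductive step produces exactly the signs $(-1)^{i}\binom{d+1}{i}$ appearing in the proposition.
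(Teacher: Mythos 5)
Your proof is correct and complete: the induction establishing the closed form $(\Delta^{k}f)(x)=\sum_{i=0}^{k}(-1)^{i}\binom{k}{i}f(x-i)$ via Pascal's identity, combined with the observation that the backward difference operator strictly lowers degree, is the standard argument for this fact. The paper simply records the proposition without proof (it is used as a known identity in the Schubert-calculus computations), so there is no alternative approach to compare against; your write-up supplies exactly the justification the paper leaves implicit.
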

When $\lambda = (\lambda_1,\lambda_2,0,\ldots)$ we have:
\begin{align*}
&m_k(\lambda_1,\lambda_2)=  \\
&\sum_{i=0}^k (-1)^i \binom{\lambda_1+\lambda_2+1}{i} \!\!
\binom{k-i-1+\lambda_1}{\lambda_1} \!\!\binom{k-i-2+\lambda_2}{\lambda_2}
\frac{\lambda_1-\lambda_2+1}{\lambda_1+1}. 
\end{align*}
For instance, when $\lambda_1=2$ and $\lambda_2=1$ we have
\begin{align*}
m_k(2,1)&=\sum_{i=0}^k (-1)^i \binom{4}{i}\frac{(k-i+1)(k-i)(k-i-1)}{3} \\
&=2\left(\!\binom{k+1}{3}-4\binom{k}{3}+6\binom{k-1}{3}-4\binom{k-2}{3}+\binom{k-3}{3}\!\right) \\
&=\begin{cases} 2 & \text{ if }k=2, \\
               0 & \text{ if }k\ge 3.
       \end{cases} 
\end{align*}
For general $\lambda_1$ and $\lambda_2$,
$$m_2(\lambda_1,\lambda_2)=\lambda_1-\lambda_2+1$$
and
$$m_3(\lambda_1,\lambda_2)=\frac{\lambda_1(\lambda_2-1)(\lambda_1-\lambda_2+1)}{2}.$$

\begin{theo} \cite[Theorem~5]{Kly85} \label{theo:KF}
If $X$ is the generic torus orbit in $\Gr=\Gr(p,p+q)$ 
and $\lambda$ is a partition with $|\lambda|=p+q-1$
then
$$[X]\cdot \sigma_{\lambda} =  m_p(\lambda).$$
\end{theo}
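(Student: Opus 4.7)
The plan is to identify $[X]\cdot\sigma_\lambda$ with a representation-theoretic degree and match the answer with $m_p(\lambda)$ via inclusion-exclusion on the hypersimplex.

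First, since the diagonal $\bG_m$ acts trivially on $\Gr$, the generic orbit $X$ has dimension $p+q-1=|\lambda|$, so $[X]\cdot\sigma_\lambda$ is a well-defined integer. The closure $\overline{X}$ is a projective toric variety whose moment polytope is the hypersimplex $\Delta(p,p+q)$. By Borel--Weil, $H^0(\Gr,L_\lambda)=\bS_\lambda(\bC^{p+q})^*$, so $[X]\cdot\sigma_\lambda$ equals the degree of $\overline{X}$ under the linear system $|L_\lambda|$. In the degenerate case where a torus orbit closure fills an ambient sub-Grassmannian $\Gr(k,k+q')$ of its own dimension, this degree is, by the Weyl dimension formula, precisely $d_k(\lambda)$.

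Second, I would realize $[X]$ as a signed combination of generic orbit classes in sub-Grassmannians via Möbius inversion on the boundary of the hypersimplex. The hypersimplex $\Delta(p,p+q)$ is cut out by the $p+q$ hyperplanes $\{x_j=0\}$; each subset $J\subset\{1,\ldots,p+q\}$ of size $i$ determines a face that is a hypersimplex $\Delta(p,p+q-i)$ of a smaller Grassmannian $\Gr(p,p+q-i)$. Inclusion-exclusion yields $\binom{p+q}{i}=\binom{|\lambda|+1}{i}$ contributions at each level with sign $(-1)^i$. Combining with the identification of each filling-orbit degree with $d_{p-i}(\lambda)$ gives
\[
[X]\cdot\sigma_\lambda=\sum_{i=0}^{p}(-1)^i\binom{|\lambda|+1}{i}d_{p-i}(\lambda)=m_p(\lambda),
\]
where the upper bound $i\le p$ is automatic since $d_k(\lambda)=0$ for $k<\height(\lambda)$, and the polynomial-difference identity recorded just before the theorem ensures that extending the sum beyond $p$ would change nothing.

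The main obstacle is the second step, namely justifying the Möbius-inversion decomposition and matching each face contribution precisely with the Weyl dimension $d_{p-i}(\lambda)$. This requires tracking which Plücker coordinates vanish on each boundary stratum and verifying that on each sub-Grassmannian the residual class is again that of a generic torus orbit of the appropriate dimension. As an alternative route, one could apply Atiyah--Bott equivariant localization at the $T$-fixed points of $\Gr$ (indexed by $p$-subsets of $\{1,\ldots,p+q\}$), using Brion's formula for the localized class of the toric orbit closure together with the Schur-polynomial expression for $\sigma_\lambda$, and then resum the resulting weight expression to recover the alternating sum defining $m_p(\lambda)$.
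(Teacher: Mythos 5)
This statement is quoted verbatim from Klyachko (\cite[Theorem~5]{Kly85}); the paper offers no proof of its own, so the only thing to assess is whether your argument would stand on its own. It would not: both steps have genuine gaps. The first step rests on a misapplication of Borel--Weil. On the Grassmannian $\Gr(p,p+q)$ the Picard group is $\bZ$, generated by the Pl\"ucker bundle $\cO(1)$, whose sections form $\bS_{(k^p)}(V)^*$ for rectangular partitions only; there is no line bundle $L_\lambda$ for a general $\lambda$, and even on the full flag variety, where $L_\lambda$ does exist, the Schubert class $\sigma_\lambda$ is a specific class in $\rH^{2|\lambda|}$ and is not the top self-intersection $c_1(L_\lambda)^{|\lambda|}$. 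So the identification of the intersection number $[X]\cdot\sigma_\lambda$ with ``the degree of $\overline{X}$ under $|L_\lambda|$'' is unfounded, and the subsequent claim that a space-filling orbit in a sub-Grassmannian contributes exactly $d_k(\lambda)$ inherits this problem: nothing in the argument connects $\sigma_\lambda$ to the Weyl dimension polynomial.

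The second step, which you flag as the main obstacle, also has its combinatorics misaligned. The facets of the hypersimplex $\Delta(p,p+q)$ come in two families: $\{x_j=0\}$, giving $\Delta(p,p+q-1)\subset\Gr(p,p+q-1)$, and $\{x_j=1\}$, giving $\Delta(p-1,p+q-1)\subset\Gr(p-1,p+q-1)$. Your target formula involves $d_{p-i}(\lambda)$, which (on your own reading) should come from orbits in $\Gr(p-i,\cdot)$, i.e., from iterated $\{x_j=1\}$ faces --- not from the $\{x_j=0\}$ faces you describe; and once both families are in play, $\binom{p+q}{i}$ is no longer the count of codimension-$i$ faces, so the coefficient $\binom{|\lambda|+1}{i}$ is not explained. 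Moreover, a toric degeneration of $\overline{X}$ to its boundary produces an \emph{effective} sum of orbit-closure classes, whereas $m_p(\lambda)$ is an alternating sum; you never specify what is being M\"obius-inverted against what to produce the signs. The equivariant-localization route you mention at the end is a reasonable modern strategy (and closer in spirit to how one would actually verify Klyachko's formula), but as written it is only a pointer, not an argument. Since the paper treats this as an external input, the honest course is to cite \cite{Kly85} rather than to reprove it along these lines.
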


For example, take $p=2$. For $q=2$ 
$$[X]\cdot \sigma_{21}=2$$
and when $q=3$ we have
$$[X]\cdot \sigma_{22}=1, \quad [X]\cdot \sigma_{31}=3.$$
For general $q$, we have $\lambda_1 \ge \lambda_2=q+1-\lambda_1 \ge 0$,
i.e.,
$$
\frac{q+1}{2} \le \lambda_1 \le q+1.
$$
Here we have
$$[X]\cdot \sigma_{\lambda_1\,q+1-\lambda_1} = 2\lambda_1 - q;$$
in particular, when $q=2m-1$ and $\lambda_1=m$
we find
$$[X]\cdot \sigma_{m\,m}=1.$$

\begin{rema}The signs in the formula for $m_k(\lambda)$ 
obscure the positivity of the result. An alternate formula
\cite[Theorem~5.1]{BF} makes this clearer:
$$
[X] = \sum_{\lambda \subset (q-1)^{p-1}}
\sigma_{\lambda} \sigma_{\widetilde{\lambda}},
$$
where $\widetilde{\lambda}$ is the complement to $\lambda$
in the rectangle $(q-1)^{p-1}$:
$$\lambda=(\lambda_1,\ldots,\lambda_{p-1}), \quad
\widetilde{\lambda}=(q-1-\lambda_{p-1},\ldots,q-1-\lambda_1).$$
We refer the reader to \cite{Lian} for the combinatorics
directly relating these formulas. 
\end{rema}
This extends to general $p\in \bN$:
\begin{prop} 
\label{prop:degreeone} 
Let $V$ be a vector space with $\dim(V)=mp+1$
so that 
$$q=(m-1)p+1 \quad \text{ and }\quad (p-1)(q-1)=(m-1)(p-1)p.
$$
Consider
the coefficient of 
$$\sigma_{\underbrace{(m-1)(p-1)\ldots (m-1)(p-1)}_{p \text{ times }}}$$
in the expansion of $[X]$ in $\rH^{2(p-1)(q-1)}(\Gr(p,p+q))$.
This equals $1$, i.e.,
$$
[X]\cdot \sigma_{\underbrace{m \ldots m}_{p \text{ times }}}=1.
$$
\end{prop}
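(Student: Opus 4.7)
The two formulations in the proposition are equivalent via Poincaré duality on $\Gr(p,p+q)$: the partitions $\mu=((m-1)(p-1))^p$ and $\lambda=(m^p)$ are complementary in the $p\times q$ rectangle, since
$$q-(m-1)(p-1) = (m-1)p+1-(m-1)(p-1) = m,$$
so the coefficient of $\sigma_\mu$ in $[X]$ equals $[X]\cdot \sigma_\lambda$. Thus it suffices to prove the second equality.

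The plan is to apply Theorem~\ref{theo:KF} directly to $\lambda=(m^p)$. First I would verify the degree condition:
$$|\lambda|=mp=p+q-1,$$
which is exactly what the theorem requires. The theorem then gives
$$[X]\cdot \sigma_\lambda = m_p(m^p) = \sum_{i=0}^p (-1)^i \binom{mp+1}{i}\, d_{p-i}(m^p).$$

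Next I would note that the partition $m^p$ has height exactly $p$, so $d_{p-i}(m^p)=0$ for every $i\geq 1$ (there is no Schur functor of a rectangle of height $p$ on a space of dimension less than $p$). Hence only the $i=0$ term survives:
$$m_p(m^p)=d_p(m^p).$$

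Finally I would compute $d_p(m^p)$. The cleanest route is representation-theoretic: for a vector space $V$ of dimension $p$, the Schur functor $\bS_{m^p}(V)$ equals $(\det V)^{\otimes m}$, which is one-dimensional. Alternatively one may use the Weyl dimension formula
$$d_n(\lambda)=\prod_{1\leq i<j\leq n}\frac{\lambda_i-\lambda_j+j-i}{j-i},$$
which for $n=p$ and $\lambda_1=\cdots=\lambda_p=m$ gives an empty product over pairs where the numerator and denominator differ, so all ratios equal $1$. Either way, $d_p(m^p)=1$, and the proposition follows.

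There is no real obstacle here; the point of the argument is recognizing that Theorem~\ref{theo:KF} combined with the rectangular shape of $\lambda$ immediately kills all terms of the alternating sum except the trivial one. The combinatorial identity $m_p(m^p)=1$ is the nontrivial content, but once one sees that the height matches $p$ and the Schur functor collapses to a determinant, it is transparent.
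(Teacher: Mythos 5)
Your proposal is correct and follows the paper's own route: the paper likewise deduces the result from Klyachko's formula (Theorem~\ref{theo:KF}) together with the identity $m_p(m,\ldots,m)=d_p(m,\ldots,m)=1$, using the fact (stated earlier in the text) that $m_k(\lambda)=d_k(\lambda)$ when $\height(\lambda)=k$. Your additional verifications — the Poincar\'e-duality equivalence of the two formulations, the degree check $|\lambda|=mp=p+q-1$, and the identification $\bS_{m^p}(V)=(\det V)^{\otimes m}$ — are all accurate elaborations of steps the paper leaves implicit.
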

Indeed, this follows from Klyachko's formula (Theorem~\ref{theo:KF}) and 
$$m_p(\underbrace{m,\ldots,m}_{p \text{ times}})
=d_p(\underbrace{m,\ldots,m}_{p \text{ times}})=1.$$

\begin{exam}
When
$\dim(V)=3m+1$ the generic orbit $X$ for the action
of $T$ on $\Gr(3,V)$ has codimension $3(3m-2)-3m=6(m-1)$
and 
$$[X]\cdot \sigma_{m\, m\, m}=m_3(m,m,m)=d_3(m,m,m)=1.$$
This is not the case when $\dim(V)=3m+2, m>1$, e.g.,
$$
[X]=
10 \sigma_{5,3} + 8 \sigma_{5,2,1} + 15 \sigma_{4,4}+ 
15 \sigma_{4,3,1} + 6 \sigma_{4,2,2} + 3\sigma_{3,3,2}.
$$
\end{exam}

\subsection*{Grassmann geometry and rationality}

\begin{theo} \label{theo:ratquot}
Let $\mathsf T$ be a maximal torus -- possibly nonsplit - for $\SL_{pm+1}$
over a field $F$. Take $\Gr(p,V)$ for $\dim_F(V)=pm+1$ with
the resulting $\mathsf T$-action.  Choose a subspace
$W \subset V$ with 
$$
\dim_F(W)=(p-1)m+1
$$ 
and transverse to $\mathsf T$ in the sense that $\Gr(p,W) \subset \Gr(p,V)$
meets some stable $\mathsf T$-orbit properly.
Then $\Gr(p,W)$
is a rational section of the quotient
$$
\Gr(p,V) \stackrel{\sim}{\dashrightarrow} \Gr(p,V)/\mathsf T.
$$
Thus if $\Gr(p,W)$ is rational, linearizable, or stably
linearizable then the same holds true of the quotient.  
\end{theo}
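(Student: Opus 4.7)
The plan is to produce an explicit rational section of the quotient map by identifying $\Gr(p,W)$ with a distinguished Schubert subvariety of $\Gr(p,V)$ and invoking the intersection-theoretic computation of Proposition~\ref{prop:degreeone}. Since the $\mathsf T$-action, the subspace $W$, and the quotient map are all defined over $F$, it suffices to verify the statement geometrically after base change to $\bar F$; the rational section then descends automatically.

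First I would reinterpret $\sigma_{(m,\ldots,m)}$ (with $p$ equal parts) as the class of $\Gr(p,W)$ in $\Gr(p,V)$. In $\Gr(p,p+q)$, the Schubert class $\sigma_{(r,\ldots,r)}$ corresponding to the constant partition of height $p$ is the class of the sub-Grassmannian of $p$-planes contained in a fixed subspace of dimension $p+q-r$. In the present setting $p+q=pm+1$ and $r=m$, so $p+q-r=(p-1)m+1=\dim_F W$, whence $[\Gr(p,W)]=\sigma_{(m,\ldots,m)}$.

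Next I would apply Proposition~\ref{prop:degreeone} to conclude
$$
[\overline{X}]\cdot[\Gr(p,W)] \;=\; [\overline{X}]\cdot\sigma_{(m,\ldots,m)} \;=\; 1,
$$
where $\overline{X}$ denotes the closure of a generic $\mathsf T$-orbit. The transversality hypothesis, that $\Gr(p,W)$ meets some stable $\mathsf T$-orbit properly, ensures that this intersection number is not absorbed into the boundary $\overline{X}\setminus X$ or inflated by excess components along fixed loci: the (generic) intersection of $\overline{X}$ with $\Gr(p,W)$ is a single reduced point lying on the open orbit. Set-theoretically, therefore, a generic $\mathsf T$-orbit on $\Gr(p,V)$ meets $\Gr(p,W)$ in exactly one point. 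It follows that the composition
$$
\Gr(p,W) \hookrightarrow \Gr(p,V) \dashrightarrow \Gr(p,V)/\mathsf T
$$
is birational, so $\Gr(p,W)$ is a rational section and $\Gr(p,V)$ is $\mathsf T$-equivariantly birational to $\mathsf T\times \Gr(p,W)$. Rationality, linearizability, or stable linearizability of $\Gr(p,W)$ then transfers immediately to the quotient.

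The main obstacle, as I see it, is packaging the transversality hypothesis cleanly enough to upgrade the intersection number equality to a true set-theoretic count: one has to rule out the possibility that the single intersection is supported in the boundary of $\overline{X}$ or along a locus where $\mathsf T$ acts with larger stabilizer. The hypothesis that $\Gr(p,W)$ meets a stable $\mathsf T$-orbit properly (together with the fact that the intersection number equals one and so cannot be reduced further by specialization) is precisely what defeats both pathologies. The descent from $\bar F$ to $F$ then costs nothing, since the Schubert-class computation is insensitive to base change and the section $\Gr(p,W)$ is defined over $F$ by assumption, even when $\mathsf T$ is nonsplit.
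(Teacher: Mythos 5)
Your argument is correct and follows the same route as the paper: identify $[\Gr(p,W)]$ with the Schubert class $\sigma_{(m,\ldots,m)}$, invoke Proposition~\ref{prop:degreeone} to get intersection number one with the generic orbit closure, and use the properness/stability hypothesis to ensure the quotient map is defined and generically finite of degree one on $\Gr(p,W)$. You have merely made explicit the Schubert-class identification and the descent to $F$, both of which the paper leaves implicit.
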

\begin{proof}
The stability assumption guarantees that the quotient map is defined
over a non-empty open subset of $\Gr(p,W)$. Properness of the
intersection -- which has degree one by Proposition~\ref{prop:degreeone}! -- implies $\Gr(p,W)$ is 
mapped birationally to the quotient.
\end{proof}

\begin{prop} \label{prop:transverse}
Retain the notation of Theorem~\ref{theo:ratquot}.

If $F$ is infinite then $\Gr(p,V)$ admits a codimension-$m$
subspace $W\subset V$ satisfying the transversality
condition.

If $F$ is finite and $p=2$ then $\Gr(2,V)$ admits a stable
$F$-rational point.

If $F$ is arbitrary and $p=2$ then for each stable point
there exists a subspace $W$ satisfying the transversality assumption.
\end{prop}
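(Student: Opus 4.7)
The plan is to treat the three cases separately, each hinging on the degree-one intersection formula of Proposition~\ref{prop:degreeone}.

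For the infinite-field case, the subspaces $W$ of codimension $m$ are parametrized by $\Gr((p-1)m+1, V)$, which is $F$-rational, and the transversality condition is open. Nonemptiness over $\bar F$ follows from Proposition~\ref{prop:degreeone}: since the intersection number with a generic stable orbit is $1$, a generic $W$ meets this orbit transversely in a single point. Being a nonempty open subscheme of a rational variety over an infinite field, this locus carries $F$-rational points.

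For the finite $F$, $p=2$ case, I would identify $V$ with the \'etale $F$-algebra $E$ of degree $n=2m+1$ attached to $\mathsf T$, with $\mathsf T$ acting by multiplication. A two-plane $W_0 = F\langle u, v\rangle \subset E$ then gives $n$ projective pairs $[\sigma(u):\sigma(v)] \in \bP^1(\bar F)$ indexed by the embeddings $\sigma : E \hookrightarrow \bar F$; stability requires no $\sigma$ to annihilate both $u$ and $v$, and no value of $\bP^1$ to be attained more than $m$ times. I would construct $(u,v)$ explicitly: when $E$ is a field, take $u=1$ and $v$ a primitive generator of $E/F$; in general, since $|\bP^1(F)| \ge q+1 \ge 3$ and $2m+1 \le 3m$ for $m \ge 1$, the $n$ indices can be distributed with multiplicities $\le m$, yielding a stable $F$-rational pair.

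For the third case, given an $F$-rational stable $[W_0]$, the candidates $W \supset W_0$ with $\dim W = m+1$ are parametrized by $\Gr(m-1, V/W_0)$. Transversality at $[W_0]$ is equivalent to injectivity of the composite
$$
\mathfrak{t} \xrightarrow{\phi_{W_0}} \Hom(W_0, V/W_0) \lra \Hom(W_0, V/W),
$$
a map between $F$-vector spaces of equal dimension $2m$, defined by the nonvanishing of a determinant. Proposition~\ref{prop:degreeone} again provides nonemptiness of the transverse locus over $\bar F$, yielding $F$-rational $W$'s over infinite $F$. For finite $F$, I would combine with Case~(2) and take $W$ adapted to $E$: e.g., when $E = F(\alpha)$ is a field, set $W = F\langle 1, \alpha, \ldots, \alpha^m\rangle$, reducing the transversality check to a Vandermonde-type nondegeneracy in the conjugates $\sigma(\alpha)$.

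The main obstacle is this last point: nonemptiness of an open subscheme does not automatically supply $F$-rational points over small $\bF_q$, so the resolution must exploit the multiplicative structure of $E$ together with stability of $W_0$ to produce an explicit $F$-rational $W$.
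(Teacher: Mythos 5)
Your first case matches the paper's argument in substance (rationality of the parameter space of subspaces $W$ over an infinite field, plus openness and nonemptiness of the transverse locus). Your second case has the right idea but glosses a real constraint: the assignment of the $2m+1$ points to points of $\bP^1$ must be Galois-equivariant, so you are distributing the Galois \emph{orbits} --- the field factors of $E$, of sizes $\ell_1\ge\cdots\ge\ell_r$ --- rather than individual indices. Your count $2m+1\le 3m$ therefore does not suffice when some $\ell_i>m$ and $E$ is not a field: that orbit cannot lie over $F$-rational points of $\bP^1$ without all of its $\ell_i>m$ points colliding. The paper separates the cases $\ell_1>m$ (use a point of degree $\ell_1$ for the big factor and collapse the rest) and $\ell_1\le m$ (then $r\ge 3$, and an inductive coarsening packs the orbits into three bins each of total size $\le m$); even the bin-packing step needs the short induction the paper supplies.

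The genuine gap is the third case, which you yourself flag as the main obstacle: over a finite field, nonemptiness of the transverse locus produces no $F$-point, and you do not identify the structure that yields an explicit transverse $W$. Moreover, your sample candidate fails. Take $E=F(\alpha)$, $\Lambda=W_0=F\langle 1,\alpha\rangle$, and $W=F\langle 1,\alpha,\ldots,\alpha^m\rangle$: the kernel of $\mathfrak{t}\to\Hom(W_0,V/W)$ is $\{t\in E: tW_0\subseteq W\}/F=F\langle 1,\alpha,\ldots,\alpha^{m-1}\rangle/F$, of dimension $m-1>0$ for $m\ge 2$ (the only constraint imposed by $t\alpha\in W$ on $t=\sum_{i\le m}c_i\alpha^i\in W$ is $c_m=0$), so this $W$ is \emph{not} transverse. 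The missing input in the paper is geometric: $\bP(\mathbf{T}_{2m})$ meets the Segre variety $\bP(\Lambda^*)\times\bP(V/\Lambda)$ in the graph of a rational normal curve $\varrho:\bP^1\to\bP^{2m-2}$ --- a Gale-transform statement resting on \cite[Corollary 3.2]{EisPop} applied to the images of the torus eigenvectors in $\bP(\Lambda)$. One then takes $W=\{x_{2j}=0,\ j=0,\ldots,m-1\}$ in the resulting coordinates, and transversality reduces to the linear independence of the restricted products $s_ix_{2j}$, i.e.\ of degree-$(2m-1)$ binary monomials, which holds over any field. Without this (or an equivalent) structural argument, the third assertion, and hence Corollary~\ref{coro:twistfinite}, is not established.
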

Combining with Theorem~\ref{theo:ratquot} gives a generalization
of the results of \cite{FR}:
\begin{coro} \label{coro:twistfinite}
Let $F$ be a finite field and $\rho$ a representation of
its Galois group in $\fS_{2m+1}$. Then ${ }^{\rho}\ocM_{0,2m+1}$
is rational over $F$.
\end{coro}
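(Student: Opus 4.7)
The plan is to reduce rationality of ${}^{\rho}\ocM_{0,2m+1}$ to rationality of a twisted Grassmannian quotient, and then produce an $F$-rational birational section of that quotient coming from a subspace Grassmannian that is automatically split over $F$.

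First, I would use that the Gelfand--MacPherson morphism $\beta: \ocM_{0,2m+1}\to X_{2m+1}$ is a $\fS_{2m+1}$-equivariant iterated blowup along smooth, disjoint, $\fS_{2m+1}$-invariant centers. In particular it descends to an iterated blowup
$$
{}^{\rho}\beta: {}^{\rho}\ocM_{0,2m+1}\lra {}^{\rho}X_{2m+1}
$$
defined over $F$, so rationality over $F$ for either variety is equivalent, and it suffices to prove that ${}^{\rho}X_{2m+1}$ is rational over $F$.

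Second, by the discussion of the Gelfand--MacPherson correspondence over nonclosed fields at the end of Section~\ref{sect:gen}, the twist ${}^{\rho}X_{2m+1}$ is exactly the torus quotient $\Gr(2,V)/\mathsf{T}$ appearing in Theorem~\ref{theo:ratquot} with $p=2$: take $V=E$ to be the $F$-vector space underlying the \'etale algebra of degree $2m+1=pm+1$ classified by $\rho$, and $\mathsf{T}\subset\SL_{2m+1}$ the induced maximal (possibly nonsplit) torus. This places us precisely in the setup of Theorem~\ref{theo:ratquot}.

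Third, I would apply Proposition~\ref{prop:transverse}. Its second clause (finite $F$, $p=2$) produces a stable $F$-rational point of $\Gr(2,V)$; its third clause ($F$ arbitrary, $p=2$) then produces an $F$-rational subspace $W\subset V$ of dimension $(p-1)m+1=m+1$ satisfying the transversality hypothesis. Theorem~\ref{theo:ratquot} then yields a birational map
$$
\Gr(2,W) \stackrel{\sim}{\dashrightarrow} \Gr(2,V)/\mathsf{T} = {}^{\rho}X_{2m+1}
$$
over $F$. Since $W$ is merely an abstract $F$-vector space of dimension $m+1$, the Grassmannian $\Gr(2,W)\simeq \Gr(2,m+1)_F$ is split and rational over $F$. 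Composing with the first step, this gives rationality of ${}^{\rho}\ocM_{0,2m+1}$.

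The main obstacle, in my view, is the second step: carefully verifying that the $\rho$-twist of the GIT quotient $\SL_2\backslash\!\backslash(\bP^1)^{2m+1}$ genuinely coincides with $\Gr(2,V)/\mathsf{T}$ in the precise form demanded by Theorem~\ref{theo:ratquot}. This means tracking compatibility of linearizations for the twisted actions, handling the $\mu_2$-gerbe structure noted at the end of Section~\ref{sect:gen}, and confirming that the stability conditions in (\ref{eqn:stab}) and the Pl\"ucker description match under twisting. Once this identification is in place, the combination of Proposition~\ref{prop:transverse} and Theorem~\ref{theo:ratquot} concludes the argument essentially automatically.
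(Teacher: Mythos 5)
Your proposal is correct and follows exactly the route the paper intends: the corollary is stated as the combination of Proposition~\ref{prop:transverse} (clauses for finite $F$ and $p=2$, then arbitrary $F$ and $p=2$) with Theorem~\ref{theo:ratquot}, using the Gelfand--MacPherson identification of the twisted quotient and the fact that $\Gr(2,W)$ is a split, hence rational, Grassmannian over $F$. Your additional care about descending $\beta$ and matching linearizations is exactly the background the paper supplies at the end of Section~\ref{sect:gen}.
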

We also obtain analogs in higher dimensions:
\begin{coro} \label{coro:HD}
Let $m\ge 1$ and $p\ge 2$ be integers.
Consider the moduli space of $pm+1$ points in $\bP^{p-1}$
up to projective equivalence. Let $X$ be a variety obtained
by twisting via permutations of the points, over an infinite field $F$.   
Then $X$ is rational.  
\end{coro}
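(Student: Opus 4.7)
The plan is to realize the twisted moduli space as a torus quotient of a Grassmannian and then apply Theorem~\ref{theo:ratquot}. First I would extend the Gelfand-MacPherson setup of Section~\ref{sect:gen} from $p=2$ to general $p\ge 2$: for $n=pm+1$, the matrix space $\Mat(p,n)$ carries commuting actions of $\GL_p$ (from the left) and $\bG_m^n$ (from the right), with $\bG_m$ in the common kernel. The two GIT quotients fit into a Gelfand-MacPherson diagram whose two sides are, respectively, $\Gr(p,n)/\mathsf T$ (with $\mathsf T=\bG_m^n/\bG_m$) and the moduli space of $n$ points in $\bP^{p-1}$ up to projective equivalence. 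This identification survives twisting by a permutation representation $\rho:\Gamma\to \fS_n$: following the recipe of \cite[\S4]{FR} recalled at the end of Section~\ref{sect:gen}, one replaces $\bG_m^n$ by $R_{E/F}\bG_m$ and $(\bP^{p-1})^n$ by $R_{E/F}\bP^{p-1}$, where $E/F$ is the \'etale algebra of degree $n$ classified by $\rho$. The twisted torus $\mathsf T$ is then a maximal torus of $\SL_{pm+1}$, and the twisted moduli space $X$ is birational over $F$ to $\Gr(p,V)/\mathsf T$ for $V=E$ viewed as an $F$-vector space of dimension $pm+1$.

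Next, I would apply Theorem~\ref{theo:ratquot} to this setup. Since $F$ is infinite, Proposition~\ref{prop:transverse} supplies an $F$-rational subspace $W\subset V$ of dimension $(p-1)m+1$ transverse to $\mathsf T$ in the sense that $\Gr(p,W)\subset \Gr(p,V)$ meets some stable $\mathsf T$-orbit properly. By Theorem~\ref{theo:ratquot}, the inclusion $\Gr(p,W)\hookrightarrow \Gr(p,V)$ descends to a birational map
$$
\Gr(p,W) \stackrel{\sim}{\dashrightarrow} \Gr(p,V)/\mathsf T.
$$
But $W$ is an honest $F$-vector space, so $\Gr(p,W)$ is a split Grassmannian over $F$ and hence $F$-rational. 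Combining with the birational identification of the previous paragraph, $X$ is $F$-rational.

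The main obstacle I expect is verifying the twisted Gelfand-MacPherson isomorphism and the descent of the stability conditions in the general $p$ case, since the excerpt writes the correspondence out only for $p=2$. The algebraic formalism goes through unchanged once one checks that the $\mathsf T$-stable locus on $\Gr(p,V)$ contains the image of an open subset of $\Gr(p,W)$, and that the linearization chosen on the $(\bP^{p-1})^n$ side is Galois-equivariant (i.e., invariant under the permutation action encoded by $\rho$); this latter point holds because the $\fS_n$-symmetric linearization is by construction permutation-equivariant, so it descends to the Weil restriction $R_{E/F}\bP^{p-1}$. The only other subtlety is that $W$ must be chosen over $F$ so that $\Gr(p,W)$ is defined over $F$ and is split as a Grassmannian; the infinitude of $F$ together with Proposition~\ref{prop:transverse} secures this, which is exactly where the hypothesis on $F$ enters.
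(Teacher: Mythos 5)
Your proposal is correct and follows exactly the route the paper intends: identify the twisted moduli space with $\Gr(p,V)/\mathsf T$ for a (possibly nonsplit) maximal torus via the Gelfand--MacPherson correspondence, invoke the infinite-field case of Proposition~\ref{prop:transverse} to produce a transverse $W$ of dimension $(p-1)m+1$, and conclude via Theorem~\ref{theo:ratquot} since the split Grassmannian $\Gr(p,W)$ is $F$-rational. The paper leaves this combination implicit ("Combining with Theorem~\ref{theo:ratquot}\dots"), and your write-up supplies the same steps, including the correct observation that the hypothesis that $F$ be infinite enters only through the existence of the transverse subspace.
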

\begin{proof}[Proof of Proposition~\ref{prop:transverse}]
Assume $F$ is infinite; here we use \cite[\S 1.2]{kapranov}.
While Kapranov assumes the ground field has characteristic
zero, the toric constructions and interpretation of $\ocM_{0,n}$ 
as a Chow quotient for the $\bPGL_2$-action are valid in positive characteristic \cite{GGchow}. 

The Grassmannian is rational over $F$ so its $F$-rational points
are Zariski dense.  We note that the torus
action determines a collection of $\overline{F}$-subspaces
$$V_I \subset V, \quad \emptyset \neq I=\{i_1,\ldots,
i_r \} \subset \{0,\ldots, mp \},
$$
spanned by eigenvectors of the torus. Consider the 
$$
W \in \Gr(mp+1-m,mp+1)
$$ 
meeting some of these improperly,
i.e., 
$$\dim(W \cap V_I) > \dim(W) + \dim(V) - \dim(V_I).$$
This is a Zariski closed proper subset of the Grassmannian,
defined over $F$; its complement has $F$-rational points.
Given such a subspace $W\subset V$, choose 
$$w \in \Lambda \subset W, \quad \dim(\Lambda)=p,
$$ 
defined over $F$, with $w$ not contained
in any of the $V_I\subsetneq V$ and 
$\Lambda$ meeting all the $V_I$ 
properly. Thus $\Lambda$ is stable for the torus action
and the torus orbit of $\Lambda$ meets $\Gr(p,W)$ transversally there.

Now assume that $F$ is finite and $p=2$. We use the stability
criterion (\ref{eqn:stab}) for points on $\bP^1$
and Kapranov's analysis of the Gelfand-MacPherson correspondence.
Here the Galois action $\rho$ on the $2m+1$ points is encoded by a 
single element $\sigma \in \fS_{2m+1}$. Express $\sigma$ as a product
of $r$ disjoint cycles of lengths $\ell_i$ with 
$$\ell_1+\cdots+\ell_r=2m+1, \quad \ell_1\ge \ell_2 \ge \cdots \ge \ell_r.$$
Only $\ell_1$ can possibly be greater than $m$;
if $\ell_1 \le m$ then we have $r\ge 3$.
When $\ell_1>m$, choose a configuration of $\ell_1$ points
defined over a degree-$\ell_1$ extension of $F$. Allow the
remaining points to all coincide. We turn to the situation where
$\ell_1 \le m$.  If $r=3$ then we allow $\ell_1$ points
to coincide with $[0,1]$, $\ell_2$ points to coincide with $[1,0]$,
and $\ell_3$ points to coincide with $[1,1]$. We may therefore
assume that $r\ge 4$ and work inductively on $r$. There
exists two indices, say $\ell_3$ and $\ell_4$, whose sum is
less than $m$. Use this to ``degenerate'' to a new partition of $2m+1$,
refined by $(\ell_1,\ldots,\ell_r)$ but 
of length $r-1$, all of whose entries are less than $m$.  
For example, we could take 
$$(\ell_1,\ell_2,\ell_3+\ell_4,\ell_5,\ldots,\ell_r).$$
Continuing in this way, we generate a partition
$$\{1,2,\ldots,r\} = A \sqcup B \sqcup C$$
such that 
$$\sum_{a\in A} \ell_a, \sum_{b\in B} \ell_b, \sum_{c\in C} \ell_c
\leq m.$$
Let points coincide in three groups according to this coarsening of our original partition, the first group to $[0,1]$, the second
to $[1,0]$, and the third to $[1,1]$.

Assume $p=2$ and $F$ is arbitrary.
We continue to assume that $\Lambda \subset V$ 
is a two-dimensional subspace that is stable in
the sense of Geometric Invariant Theory.
Let $\mathbf{T}_{2m}$ denote the tangent 
space to the torus orbit at $\Lambda$
$$\mathbf{T}_{2m} \subset \Hom(\Lambda,V/\Lambda),$$
an $2m$-dimensional subspace of the tangent space
to $\Gr(2,V)$ at $\Lambda$.
We claim there exists a subspace
$$\Lambda \subset W \subset V,$$
where $W$ has codimension $m$ in $V$, such that the composition
$$\mathbf{T}_{2m} \subset \Hom(\Lambda,V/\Lambda) \twoheadrightarrow
\Hom(\Lambda,V/W)$$
has full rank $2m$. Since the latter space is the normal directions
to $\Gr(2,W)$ at $\Lambda$, this will yield transversality. 

We record some basic geometry:
\begin{lemm}
There is a distinguished orbit 
$$\bP^1 \times \bP^{m-2} \simeq \bP(\Lambda^*) \times \bP(V/\Lambda) \subset 
\bP(\Hom(\Lambda,V/\Lambda))$$
invariant under automorphisms of $\Gr(2,V)$ fixing $[\Lambda]$.

The subspace $\bP^{2m-1}\simeq \bP(\mathbf{T}_{2m})$ cuts
out the graph of a rational normal curve 
\begin{align*}
\varrho: \bP^1_{s_0,s_1} &\hookrightarrow \bP^{2m-2}_{x_0,\ldots,x_{2m-2}} \\
[s_0,s_1] &\mapsto [s_0^{2m-2},\ldots,s_1^{2m-2}].
\end{align*}
In these coordinates, the rational normal curve has equations
$$s_0x_{i+1}=s_1x_i, \quad i=0,\ldots,2m-1.$$

Let $\Gamma \subset \bP^1$ denote the length-$(2m+1)$ subscheme 
that is the image of the eigenvectors for 
$\mathbf{T}_{2m}$ under $V^* \twoheadrightarrow \Lambda^*$.
Then $\varrho$ realizes the Gale transform for $\Gamma \subset
\bP^1$ as a subscheme of $\bP^{2m-2}$ contained in a 
rational normal curve.
\end{lemm}
The first assertion reflects the fact that the parabolic
subgroup of $\bPGL_{2m+1}$ fixing $[\Lambda]$ 
has semisimple part $(\GL_2\times \GL_{2m-1})/\bG_m$. Note
that the unipotent part acts trivially on the tangent space.
The second assertion is true for the generic codimension-$(2m-2)$ linear slice of $\bP^1 \times \bP^{2m-1}$. Of course, one
has to show that this applies in our siutation! This follows
from the third assertion, a special
case of \cite[Corollary 3.2]{EisPop} -- the first application
following the statement.  This completes the proof of the lemma.

Returning to the proof of the Proposition, we may take 
$W$ as the subspace given by
$$\{x_{2j}=0, j=0,\ldots, m-1\},
$$ 
where we interpret $x_j \in (V/\Lambda)^*a$.
It is clear that the products
$$\{s_ix_{2j}, i=0,1, j=0,\ldots, m-1\}$$
have the desired spanning property; the elements
$$s_0^{2m-1},\ldots,s_1^{2m-1}$$
are a basis for bilinear forms of degree $2m-1$.
\end{proof}

\subsection*{Partitioning the points}
We start with a general construction: Let $n\ge 3$ be an integer
and $n=\ell m$ a factorization in integers $\ell, m>1$.
Suppose that $H \subset \fS_{\ell}, A \subset \fS_m$ are
subgroups.  The {\em wreath product} 
$$A\wr H = A\wr_{1,\ldots,\ell} H$$
is the semidirect product $A^{\ell} \rtimes H$ where
$$(a_1,\ldots,a_{\ell})\cdot h= (a_{h^{-1}(1)},\ldots, a_{h^{-1}(\ell)}).$$
This comes with a natural embedding 
$$\rho: A\wr H \hookrightarrow \fS_{\ell m}$$
as permutations of pairs 
$$(i,j), \quad i \in \{1,\ldots,m\}, j \in \{1,\ldots,\ell\}.$$

Now assume that $m\ge 3$. Forgetting maps yield an
equivariant morphism
$$
\phi: { }^{\rho}\ocM_{0,\ell m} 
\rightarrow \prod_H { }^{\alpha}\ocM_{0,m},
$$
where $\alpha:A \hookrightarrow \fS_m$ and 
the twisted product denotes $\ell$ copies
of the moduli space with the associated $H$-action.
The generic fiber of this morphism is irreducible
of dimension
$$(\ell m -3)-\ell(m-3)=3\ell -3.$$
It is birational to the Hilbert scheme of multidegree-$(1,\ldots,1)$ curves 
in the $H$-twisted product $\prod_H C_j$ of $\ell$ genus-zero curves.  
Geometrically, this is a compactification of the homogeneous space
$$\underbrace{\bPGL_2 \times \cdots \times \bPGL_2}_{\ell \text{ times }}/\bPGL_2$$
with the last $\bPGL_2$ embedded diagonally.  

We record some observations on the generic fiber of $\phi$:
\begin{itemize}
\item
Suppose $\ell=2$.  Geometrically, $(1,1)$ curves in $\bP^1 \times \bP^1$
are parametrized by $\bP^3$ -- the dual to the projective space containing 
the Segre embedding of $\bP^1\times \bP^1$.  Over an arbitrary field
the fiber is a Brauer-Severi threefold.
\item
Suppose that $m$ is odd.  Then the genus-zero curves $C_j$ appearing in the
twisted product
are split and -- over the extension/subgroup associated with 
$A^{\ell} \subset A\wr H$ 
-- isomorphic to $\bP^1$'s. Here the twisted product $\prod_H C_j$ is rational,
as it is isomorphic to the restriction of scalars of $\bP^1$.
\item
Now assume $\ell=2$ and $m$ odd. Here the generic fiber of $\phi$
is isomorphic to $\bP^3$ over the function field/linearizable for the full wreath product. 
\end{itemize}

\begin{exam} \label{exam:segp3}
Suppose $n=6$ and consider $G=\fS_3 \wr \fS_2 \subset \fS_6$, a subgroup of
index $10$ preserving an unordered partition
$$\{1,2,3,4,5,6\} = \{i,j,k \} \sqcup \{a,b,c\}.$$
Then the associated ${ }^{\rho}\ocM_{0,6}$ is rational/linearizable.
These actions correspond to situations where the associated Segre threefold
admits an invariant node (cf.~Theorem~\ref{thm:segre-rat} below).
\end{exam}

\begin{theo} \label{theo:part}
Let $n=2m$, with $m\ge 3$ odd. Fix a subgroup
$A \subset \fS_m$ and the diagonal subgroup
$$G:=A \times \fS_2 \subset A \wr \fS_2  \subset \fS_{2m}.$$

\begin{itemize}
\item{For each Galois representation $\rho:\Gamma \rightarrow G$
the twist ${ }^{\rho}\ocM_{0,n}$ is rational over $F$.}
\item{The $G$ action on $\ocM_{0,n}$ is stably linearizable.}
\end{itemize}
\end{theo}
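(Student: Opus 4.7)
The plan is to exhibit ${}^{\rho}\ocM_{0,n}$ birationally as a trivial $\bP^3$-bundle over a Weil restriction base, and then reduce to the odd case via Proposition~\ref{prop:yields} (resp.\ Proposition~\ref{prop:oddcycle}). Write $\rho=(\rho_A,\rho_2)$ for the two components, let $F'/F$ be the quadratic étale algebra classified by $\rho_2$, and let $\alpha'$ denote the restriction of $\rho_A$ to the absolute Galois group $\Gamma_{F'}$ of $F'$. First I would construct the $G$-equivariant forgetting morphism
$$
\phi:{}^{\rho}\ocM_{0,2m}\dashrightarrow B:=R_{F'/F}\!\left({}^{\alpha'}\ocM_{0,m}\right)
$$
by forgetting each half of the partition $\{1,\ldots,m\}\sqcup\{m+1,\ldots,2m\}$: the two forgetting maps are $A$-equivariant for the diagonal $A$ and are swapped by $\fS_2$, so after twisting they descend to the Weil restriction. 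As in the observations preceding the theorem, the generic fiber of $\phi$ parametrizes $(1,1)$-curves in the $\fS_2$-twisted product of the two universal conics. Since $m$ is odd, the universal conic over ${}^{\alpha'}\ocM_{0,m}$ admits an odd-degree multisection from its $m$ tautological sections, so it is birationally trivial over $F'$; the twisted product is therefore $F(B)$-isomorphic to $R_{F'(B)/F(B)}\bP^1$, a smooth quadric surface in $\bP^3_{F(B)}$, and its $(1,1)$-curves are the hyperplane sections, forming the dual projective space $\bP^3_{F(B)}$ intrinsically. Hence ${}^{\rho}\ocM_{0,2m}\sim_F \bP^3\times B$.

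Rationality of $\bP^3\times B$ over $F$ then follows from Proposition~\ref{prop:yields} combined with Weil restriction. Since $m$ is odd, $\bP^1\times{}^{\alpha'}\ocM_{0,m}$ is $F'$-rational; because Weil restriction commutes with products and sends $\bA^d_{F'}$ to $\bA^{2d}_F$,
$$
R_{F'/F}\bP^1\times B \;=\; R_{F'/F}\!\left(\bP^1\times{}^{\alpha'}\ocM_{0,m}\right)
$$
is $F$-rational. The smooth quadric surface $R_{F'/F}\bP^1$ carries the $F$-point coming from any element of $\bP^1(F)\subset \bP^1(F')$, so stereographic projection makes it $F$-birational to $\bP^2$. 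Hence $\bP^2\times B$ is $F$-rational, and so is $\bP^3\times B\sim_F \bP^1\times \bP^2\times B$, yielding the first bullet. The same fibration argument, run equivariantly, gives the second bullet: the generic fiber carries trivial $G$-action (generic stabilizer on $B$ is trivial), so it suffices to show $B=\ocM_{0,m}\times\ocM_{0,m}$ is $G$-equivariantly stably linearizable. Proposition~\ref{prop:oddcycle} provides an $A$-equivariant birational equivalence $\ocM_{0,m}\times\bP^s\dashrightarrow \bP(V)$ for some $A$-representation $V$; applying it to both factors simultaneously yields an $(A\times\fS_2)$-equivariant equivalence $\ocM_{0,m}^2\times (\bP^s)^2\dashrightarrow \bP(V)^2$, which is linearizable since the open subset $\bA(V)\times\bA(V)=\bA(V\oplus V)$ is a linear $(A\times\fS_2)$-representation with $\fS_2$ swapping the summands.

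The main obstacle I foresee is verifying the \emph{splitness} of the generic fiber in the first step---that it is genuinely $\bP^3_{F(B)}$ rather than some nontrivial Brauer-Severi threefold. The odd parity of $m$ is used twice: once to trivialize the underlying conic bundles on ${}^{\alpha'}\ocM_{0,m}$ via an odd-degree multisection, and once to ensure that the ambient quadric $R_{F'/F}\bP^1$ carries an $F$-rational point, pinning down the space of its hyperplane sections (and hence the generic fiber) as an honest $\bP^3$ with no Brauer obstruction.
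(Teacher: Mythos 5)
Your proposal is correct in structure and follows the same skeleton as the paper's proof: the forgetting morphism $\phi$ to the Weil restriction $R_{F'/F}({}^{\alpha'}\ocM_{0,m})$, identification of the generic fiber with the space of $(1,1)$-curves on the twisted product of the two universal conics, and the double use of the parity of $m$ to split those conics and to rule out a Brauer--Severi twist of the fiber. Where you genuinely diverge is in how you make the base usable. The paper concludes by citing that \emph{every} twist of $\ocM_{0,m}$ is rational over $F'$ (via \cite{FR} together with Corollary~\ref{coro:twistfinite} for finite fields), so that $B$ itself is $F$-rational and rationality of the generic fiber finishes the argument. You instead only invoke the weaker statement of Proposition~\ref{prop:yields} --- that $\bP^1\times{}^{\alpha'}\ocM_{0,m}$ is $F'$-rational --- and absorb the stabilizing $\bP^1$ into the fiber direction using $R_{F'/F}(\bP^1\times Y)=R_{F'/F}\bP^1\times R_{F'/F}Y$, the $F$-point on the quadric $R_{F'/F}\bP^1$, and $\bP^3\sim\bP^1\times\bP^2$. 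This buys a more self-contained proof that does not lean on the full rationality theorem of Florence--Reichstein, at the (mild) cost of needing the fibration to be birationally the \emph{trivial} $\bP^3$-bundle rather than merely having rational generic fiber; you correctly isolate and verify the splitness point that makes this work.

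One local slip in the equivariant half: $\bA(V)\times\bA(V)$ is \emph{not} an open subset of $\bP(V)\times\bP(V)$ (the dimensions already disagree by two), so the final sentence does not literally establish linearizability of $\bP(V)^2$. What you can say is that $(V\setminus 0)^2\to\bP(V)^2$ is a torsor under the permutation torus $\bG_m^2$ (with $\fS_2$ swapping the factors), so by the torsor/No-Name formalism (Proposition~\ref{prop:stableT}) $\bP(V)^2$ is \emph{stably} linearizable; since the theorem only claims stable linearizability of $\ocM_{0,n}$, this suffices and the rest of your chain goes through. You should also make explicit, as the paper does, that $\phi$ is birationally the projectivization of a $G$-equivariant rank-four bundle (the tensor product of the two rank-two bundles furnished by the odd multisections) before invoking triviality of the action on the fiber.
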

\begin{proof}
We assume $\fS_m$ permutes the points with odd and even indices respectively.

We focus first on the arithmetic case. Let $L/F$ be the quadratic
extension associated with $A$. Over $L$, the generic point 
of the twisted moduli space corresponds to $\bP^1$ equipped
with reduced and disjoint
zero-cycles $Z_{odd},Z_{even}\subset \bP^1$ of length $m$. 
The parity of $m$ ensures that the underlying curve is $\bP^1$.

Note that the variety ${}^{\rho}{\ocM_{0,n}}$ is already
stably rational over $L$ by Proposition~\ref{prop:yields}.

Consider forgetting the even and odd points
$$(\pi_{odd},\pi_{even}):({ }^{\rho}{\ocM_{0,n}})_L \rightarrow
{ }^{\varpi_{odd}}\ocM_{0,m} \times { }^{\varpi_{even}}\ocM_{0,m}$$
where the Galois actions come via restriction to the even and
odd points. These actions are conjugate for the quadratic
extension $L/F$.  Descent therefore gives a morphism over $F$
$$\phi:{ }^{\rho} \ocM_{0,n} 
\rightarrow R_{L/F} ({ }^{\varpi_{odd}}\ocM_{0,m}),
$$
where the target is the restriction of scalars. The twists of
$\ocM_{0,m}$ are rational over $L$ by \cite{FR} and Corollary~\ref{coro:twistfinite}.
The restriction of scalars of a rational variety is rational.  

We claim that the generic fiber of $\phi$ is rational over the function field of the base, which implies rationality for 
${ }^{\rho}\ocM_{0,n}$ over $F$.  
This follows from the analysis above for $\ell=2$ and odd $m$.

For the equivariant case, our geometric argument shows that 
the $G$-variety $\ocM_{0,n}$ is birationally the projectivization
of an equivariant vector bundle over a stably linearizable variety
(by Proposition~\ref{prop:yields}). Note that restriction of 
scalars in the arithmetic situation corresponds to passing to an induced
representation in the equivariant context; thus stable 
linearizability is clearly preserved.  We conclude then that
$\ocM_{0,n}$ is stably linearizable.
\end{proof}

\begin{coro} \label{coro:cyclic}
Let $C_{2m}$, with $m$ odd, be a cyclic group. Then twists of $\ocM_{0,n}$
by this group are rational (in the Galois case) and stably linearizable (in the
equivariant situation).
\end{coro}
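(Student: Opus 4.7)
The plan is to identify the regular embedding $C_{2m} \hookrightarrow \fS_{2m}$, up to conjugation in $\fS_{2m}$, with the diagonal subgroup $C_m \times \fS_2 \subset C_m \wr \fS_2$, so that Theorem~\ref{theo:part} applies directly with $A = C_m$.

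To set this up, I would fix a generator $\sigma$ of $C_{2m}$. Under the regular action on itself, $\sigma$ is a $2m$-cycle, so $\sigma^2$ decomposes into two disjoint $m$-cycles and $\sigma$ swaps the two underlying orbits. Labelling each orbit by $\{1,\ldots,m\}$ realises the image of $C_{2m}$ inside $C_m \wr \fS_2 \subset \fS_m \wr \fS_2$, and writes $\sigma = ((a_1,a_2),\tau)$ with $\tau$ the swap and $a_1 a_2$ forced to be a generator of $C_m$ by the order condition.

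The next step is to diagonalize. A short wreath-product calculation shows that conjugating $\sigma$ by $((e,b),e) \in C_m^2$ produces $((a_1 b^{-1}, b a_2),\tau)$, which lies in the diagonal $\{((c,c),\tau) : c\in C_m\}$ precisely when $b^2 = a_1 a_2^{-1}$. Since $m$ is odd, squaring is an automorphism of $C_m$, so such a $b$ exists. After conjugation, $\sigma = ((c,c),\tau)$ with $c^2 = a_1 a_2$ a generator of $C_m$; by order, the image of $C_{2m}$ coincides with the diagonal $C_m \times \fS_2$. In particular, every Galois representation $\Gamma \to C_{2m}$ composed with the regular embedding factors through $A \times \fS_2 \subset A \wr \fS_2 \subset \fS_{2m}$ with $A = C_m \subset \fS_m$.

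Theorem~\ref{theo:part} applied with $A = C_m$ and $G = A \times \fS_2 = C_{2m}$ then yields rationality over $F$ in the Galois case and stable linearizability in the equivariant case. The only real work is the wreath-product bookkeeping, and the only structural input is that squaring is an automorphism of $C_m$ for $m$ odd, which is precisely the source of the parity hypothesis; no serious obstacle is expected.
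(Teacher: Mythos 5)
Your wreath-product computation is correct as far as it goes: for the regular embedding $C_{2m}\hookrightarrow \fS_{2m}$, conjugating $\sigma=((a_1,a_2),\tau)$ by $((e,b),e)$ with $b^2=a_1a_2^{-1}$ (solvable because squaring is an automorphism of $C_m$ for $m$ odd) does place the $2m$-cycle in the diagonal $C_m\times\fS_2\subset C_m\wr\fS_2$, and Theorem~\ref{theo:part} then applies with $A=C_m$. This is essentially the bookkeeping the paper leaves implicit when it invokes Theorem~\ref{theo:part}.

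The gap is that the corollary is not only about the regular action on $n=2m$ points. A twist of $\ocM_{0,n}$ by $C_{2m}$ is determined by an arbitrary faithful embedding $C_{2m}\hookrightarrow\fS_n$ for arbitrary $n$, so the orbit decomposition of $\{1,\ldots,n\}$ can involve several orbits of various sizes dividing $2m$; your argument covers only the case of a single free orbit. Two families of cases are missing. First, if some orbit has odd cardinality (for instance a fixed point, which is exactly the situation behind the $\ocM_{0,6}/\mathbb{R}$ example the corollary is meant to generalize), the group does not sit inside any $A\wr\fS_2$ compatibly with a two-block partition, and the paper instead invokes Propositions~\ref{prop:oddcycle} and~\ref{prop:yields}: an invariant odd cycle produces an odd-degree multisection of the universal $\bP^1$-bundle, respectively rationality of the Kapranov model. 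This is a genuinely different mechanism that your proposal would need to import. Second, if all orbits are even, of sizes $2d_i$ with each $d_i$ odd, your diagonalization must be carried out orbit by orbit (the same trick works since each $d_i$ is odd), and one must then verify the hypothesis of Theorem~\ref{theo:part} that half the number of points is odd; in your single-orbit case this is automatic since $n/2=m$, but for several orbits it is an additional point that has to be addressed. So the proposal establishes a special case of the statement rather than the statement itself.
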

\begin{proof}
If the action has an odd orbit then this follows from 
Propositions \ref{prop:oddcycle} and \ref{prop:yields}.  
Otherwise, all the orbits are even and we may apply Theorem~\ref{theo:part}.  
\end{proof}

\begin{rema} \label{rema:odd+odd}
Similar reasoning applies for a Galois action
$$\rho: \Gamma \rightarrow 
\fS_{m_1} \times \fS_{m_2} \subset \fS_{m_1+m_2}, \quad
m_1,m_2\ge 3 \text{ odd},$$
with restricted actions $\varpi_1$ and $\varpi_2$
on the first $m_1$ points and last $m_2$ points respectively.
Proposition~\ref{prop:yields}
already gives stable rationality in this case.
The forgetting morphism
$$\phi:{ }^{\rho} \ocM_{0,m_1+m_2} \rightarrow 
{ }^{\varpi_1}\ocM_{0,m_1} \times { }^{\varpi_2}\ocM_{0,m_2}
$$
has generic fiber birational to $\bP^3$ 
by the reasoning above. Since the factors
${ }^{\varpi_i}\ocM_{0,m_i}$ are rational, 
${ }^{\rho} \ocM_{0,m_1+m_2}$ is rational as well.
\end{rema}

\section{Stable linearizability via torsors}
\label{sect:linear}

Let $G$ be a finite group and $\mathsf T$ a $G$-torus,
i.e., a torus equipped with a representation of $G$ 
on its character module $\mathfrak X^*(\mathsf T)$. 
Recall that $\mathsf T$ is stably linearizable if
$\mathfrak X^*(\mathsf T)$ is stably permutation, see, e.g., \cite[Proposition 2]{HT-torsor}.

\begin{prop}
\label{prop:stableT}
Let $U$ be a smooth quasi-projective variety
with $G$-action.  
Assume that we have a $\mathsf T$-torsor
$$\cP \rightarrow U,$$
i.e., a $\mathsf T$-principal homogeneous space
over $U$, in the category of $G$-varieties. Assume that 
\begin{itemize}
\item 
the $G$-action on $U$ is generically free,  
\item the characters $\mathfrak X^*(\mathsf T)$ are a stably permutation $G$-module, 
\item the $G$-action on $\cP$ is stably linearizable. 
\end{itemize}
Then the $G$-action on $U$ is stably linearizable.
\end{prop}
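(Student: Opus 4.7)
The plan is to reduce to a torsor under a quasi-trivial torus and exploit equivariant generic triviality. Since $\mathfrak{X}^*(\mathsf T)$ is stably permutation, choose $G$-permutation modules $P_1,P_2$ together with an isomorphism $\mathfrak{X}^*(\mathsf T)\oplus P_1\simeq P_2$, and let $\mathsf R_1,\mathsf R_2$ be the corresponding quasi-trivial $G$-tori, so that $\mathsf T\times\mathsf R_1\simeq \mathsf R_2$ as $G$-tori. Form $\cQ:=\cP\times \mathsf R_1$ with the diagonal $G$-action and the obvious $\mathsf T\times\mathsf R_1$-action ($\mathsf T$ acting on $\cP$ via the given torsor structure, $\mathsf R_1$ acting on itself by translation). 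Then $\cQ\to U$ is a $G$-equivariant $\mathsf T\times\mathsf R_1$-torsor, hence a $G$-equivariant $\mathsf R_2$-torsor.

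The next step is to establish the $G$-birational equivalence
\[
\cQ\;\sim_G\; U\times \mathsf R_2.
\]
Embed $\mathsf R_2$ $G$-equivariantly as the locus of nonvanishing coordinates in $\bA(V)$, where $V$ is the permutation $G$-representation with character module $P_2$, and form the associated $G$-equivariant vector bundle
\[
\cE := \cQ\times^{\mathsf R_2}\bA(V)\longrightarrow U,
\]
into which $\cQ$ embeds as a $G$-stable dense open. Applying the no-name lemma of Bogomolov--Katsylo to the generically free $G$-variety $U$, the bundle $\cE$ is $G$-birational to the trivial product $U\times\bA(V)$; restricting to the open subsets $\cQ\subset\cE$ and $\mathsf R_2\subset\bA(V)$ yields the displayed equivalence.

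The remainder is formal. By hypothesis $\cP$ is stably linearizable, and $\mathsf R_1$ is $G$-linearizable (being an open subvariety of $\bA(V_1)$ for the permutation representation $V_1$), so using the standard identity $\bP(W)\times\bA(W')\sim_G\bP(W\oplus W')$ for $G$-representations $W,W'$, the product $\cQ=\cP\times\mathsf R_1$ is stably linearizable. Transporting along $\cQ\sim_G U\times\mathsf R_2$ gives stable linearizability of $U\times\mathsf R_2$, and the same identity applied to $\mathsf R_2\subset\bA(V)$ converts this into stable linearizability of $U$ itself.

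The main obstacle is the $G$-equivariant birational trivialization of the $\mathsf R_2$-torsor $\cQ\to U$. At the generic fiber, Hilbert 90 combined with Shapiro's lemma trivializes any torsor under a quasi-trivial torus, but the upgrade to a $G$-equivariant statement must be passed through the no-name lemma applied to the vector bundle $\cE$. This style of argument is standard in the equivariant geometry of tori and is closely related to the characterization of stably linearizable tori cited from \cite{HT-torsor}.
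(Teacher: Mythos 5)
Your proposal is correct and follows essentially the same route as the paper: both arguments use the stably-permutation hypothesis to enlarge $\mathsf T$ by a permutation torus into a quasi-trivial torus sitting as a dense open in a permutation representation, realize the enlarged torsor as a dense open subset of an associated $G$-equivariant vector bundle over $U$, and invoke the no-name lemma (via generic freeness of the $G$-action on $U$) to trivialize it birationally, after which the stable linearizability of $\cP$ and of the permutation tori is transported to $U$ by formal manipulations. The only difference is organizational — you state the conclusion as a direct $G$-birational equivalence $\cQ\sim_G U\times\mathsf R_2$ rather than phrasing it as the existence of a $G$-equivariant rational section of $\cP\to U$ — and this changes nothing of substance.
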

\begin{proof}
We claim there is a $G$-equivariant birational map,
$$
\begin{array}{rcccl}
\cP & &\stackrel{\sim}{\dashrightarrow} & & \mathsf{T} \times U \\
   & \searrow & & \swarrow & \\
   &          & U & & 
   \end{array}
$$
which would follow if $\cP \rightarrow U$ 
admits a $G$-equivariant rational section.  
We clearly have such a section after 
discarding the $G$-action, by Hilbert's Theorem 90.

Since $\mathsf T$ is stably permutation, a
product $\mathsf T \times \mathsf T_1$, where $\mathsf T_1$
is a permutation torus, is
isomorphic to a permutation torus and may be
realized as a dense open subset of affine space.  
It follows that we have an open embedding
$$
\begin{array}{rcccl}
\cP\times_U \mathsf T_1 & & \hookrightarrow & & \cV \\
  & \searrow  &  & \swarrow & \\
      &   &  U  &  & 
      \end{array}$$
where $\cV\rightarrow U$ is a vector bundle 
with $G$-action.  The vector bundle admits a rational section (by the No-Name Lemma) thus $\cP$ 
does as well.

We assumed that $\cP$ is stably linearizable,
i.e. $\cP\times \bG_m^r$ is linearizable
for some $r$. Thus $U \times \mathsf T \times \bG_m^r$ is as well.  
We observed that $\mathsf T$ is stably linearizable because its character module
is stably permutation, i.e. $\mathsf{T}\times \mathsf{T}_1$ is a permutation torus. Another application of the
No-Name Lemma, using the assumption that the action
on $U$ is generically free, gives that $U$ is 
stably linearizable.
\end{proof}

We recall the exact sequence \eqref{eqn:NQ}
$$
0\to N\to M\to Q\to 0,  
$$
with  $M=\Pic(\ocM_{0,n})$, $N$ an $\fS_n$-permutation module, and 
$Q$ is an index-2 submodule of the permutation module $\bZ[\fS_n/\fS_{n-1}]$. 
We record:
\begin{itemize}
    \item if $\rH^1(G,Q) =0$ for some $G\subset \fS_n$, then also $\rH^1(G,M)=0$, 
    by the long exact sequence in cohomology, 
    \item if $Q$ is a stably permutation $G$-module, then the sequence splits and 
    $\Pic(\ocM_{n,0})$ is a stably permutation module, by \cite[Lemma 1]{CTS77}. 
\end{itemize}


\begin{theo}
\label{thm:stable}
Let $G\subseteq \fS_n$ be a subgroup such that $Q$ is a stably permutation module. Then 
the $G$-action on $\ocM_{0,n}$ is stably linearizable.

Let $X$ be a form of $\ocM_{0,n}$ over $F$ such that the action of the absolute Galois group on $Q$ gives rise to a stable permutation module. 
Then $X$ is stably rational over $F$. 
\end{theo}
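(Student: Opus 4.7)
The plan is to apply Proposition~\ref{prop:stableT} to a torus torsor coming from the Gelfand-MacPherson correspondence. Specifically, the map $\CGr(2,n)\setminus\{0\}\to X_n$ is, on the open subset $X_n^{st}\subset X_n$ of stable points, a torsor under $\mathsf T := \bG_m^n/\mu_2$, the torus acting faithfully on the affine cone over the Grassmannian. Its character module is precisely $Q$, as computed in the discussion preceding~\eqref{QH1}. I would pull back this torsor along the $\fS_n$-equivariant blowup $\beta:\ocM_{0,n}\to X_n$ to obtain a $\mathsf T$-torsor $\cP \to U$ over a dense $\fS_n$-invariant open subset $U\subset\ocM_{0,n}$ on which $\beta$ restricts to an isomorphism into $X_n^{st}$.

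Next, I would verify the three hypotheses of Proposition~\ref{prop:stableT}. The $G$-action on $\ocM_{0,n}$, and hence on $U$, is generically free since $\fS_n=\Aut(\ocM_{0,n})$ for $n\ge 5$. The character module $\mathfrak{X}^*(\mathsf T)=Q$ is a stably permutation $G$-module by hypothesis. For the third hypothesis, I would show that $\cP$, an open subset of $\CGr(2,n)$, is $G$-stably linearizable by comparing with the linear representation $\Mat(2,n)=\bA^{2n}$ of $\fS_n$ (where columns are permuted) via the generic $\SL_2$-torsor $\Mat(2,n)\to\CGr(2,n)$. Since $\SL_2$ is a special group (Hilbert~90), this torsor is birationally trivial, and the $\fS_n$-action on the $\SL_2$ factor is trivial because the left $\SL_2$-action and the right $\fS_n$-action commute. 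Hence
$$\Mat(2,n) \underset{\fS_n}{\sim} \CGr(2,n)\times\SL_2$$
birationally, exhibiting $\CGr(2,n)$, and therefore $\cP$, as $\fS_n$-stably linearizable. Proposition~\ref{prop:stableT} would then deliver stable $G$-linearizability of the action on $\ocM_{0,n}$.

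For the Galois-theoretic statement, the same construction descends to $F$: via the nonsplit Gelfand-MacPherson correspondence described at the end of Section~\ref{sect:gen}, one obtains an $F$-rational $\mathsf T$-torsor structure, where $\mathsf T$ now denotes the appropriate twisted form with character module a Galois-twist of $Q$. The stably permutation hypothesis on this character module yields $F$-stable rationality of ${}^{\rho}\mathsf T$. The twisted total space is $F$-stably rational since it sits inside a twisted form of $\Mat(2,n)=\bA^{2n}$ whose $\SL_2$-bundle structure is trivialized by Hilbert~90 over any field. The Galois analog of Proposition~\ref{prop:stableT}, whose proof is the same with Hilbert~90 replacing the No-Name Lemma in the torsor trivialization step, then yields stable rationality of $X$ over $F$.

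The main obstacle is verifying the third hypothesis of Proposition~\ref{prop:stableT}, namely that the torsor total space is $G$-stably linearizable; this reduces to showing that $\CGr(2,n)$ is $\fS_n$-stably linearizable and leverages the special-group property of $\SL_2$. Secondary bookkeeping involves identifying the precise open loci on which the torsor is defined, and the descent for nonsplit tori in the Galois setting; both follow the framework already set up in Section~\ref{sect:gen}.
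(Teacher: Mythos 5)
Your proposal is correct and follows essentially the same route as the paper: both apply Proposition~\ref{prop:stableT} to the Gelfand--MacPherson $\mathsf T$-torsor over the free/stable locus of $X_n$, identify $\mathfrak X^*(\mathsf T)=Q$, and reduce the remaining hypothesis to stable linearizability of the $\fS_n$-action on $\CGr(2,n)$. The only cosmetic difference is that the paper cites \cite[Proposition 19]{HT-torsor} for that last point while you reprove it via the $\SL_2$-torsor $\Mat(2,n)\to\CGr(2,n)$ and the specialness of $\SL_2$, which is exactly the argument behind the cited result; similarly your Galois-theoretic descent corresponds to the paper's appeal to \cite[Prop.~3]{BCTSSD} and the torsor formalism of \cite{CTS87}.
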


\begin{proof} 
For the equivariant statement, we apply Proposition~\ref{prop:stableT}.
Here $\mathsf{T}$, with character module $Q$ 
acts on $\CGr(2,n)$ (see Section ~\ref{sect:gen}). Let $V\subset \CGr(2,n)$ the open subset over
which $\mathsf{T}$ acts freely and $U \subset X_n$
the corresponding locus in the quotient, i.e., 
remove all the strictly semistable points. 
We have a torsor
    $$
    V\stackrel{\mathsf T}{\lra} U.
    $$
By \cite[Proposition 19]{HT-torsor}, the $\fS_n$-action on $\Gr(2,n)$ (and its cone)
is stably linearizable. 
 Assuming that $Q=\mathfrak X^*(\mathsf T)$ is a stable permutation module for $G\subset \fS_n$, and applying Proposition~\ref{prop:stableT}, we conclude that the $G$-action on $U$, and thus $\ocM_{0,n}$, is stably linearizable as well. 

The Galois-theoretic result is proven analogously, with
\cite[Prop.~3]{BCTSSD} playing the role of
Proposition~\ref{prop:stableT}. This is an application of the torsor formalism of \cite{CTS87}.
\end{proof}

\begin{rema}
\label{rema:7}
There exist linearizable $G$-actions on $\ocM_{0,n}$ such that the induced action on $Q$ is not stably permutation. Consider $n$ even and $G=C_2$ generated by
$\sigma:=(1,2)\cdots (n-1,n)$; we have $\rH^1(C_2,Q)\neq 0$
(see Remark~\ref{rema:H1Qnonzero}) so $Q$ is not stably permutation.
This action is equivariantly birational -- by Proposition~\ref{prop:twopointtoric} -- to an action on a torus ${\mathsf T}=\bG_m^{n-3}$. Its character module consists
of the elements of $\bZ^{n-2}$ -- the twisted permutation
module on $\{1,\ldots,n-2\}$ -- whose coordinates sum to zero
(see Equation~\ref{eq:LMchar}).
The action of $C_2$ on the twisted permutation module 
consists of $(n-2)/2$ copies of
$\left( \begin{matrix} 0 & -1 \\ -1 & 0 \end{matrix} \right)$.
Hence $\mathfrak{X}^*(\mathsf{T})$ 
decomposes as a sum of $\frac{n}{2}-2$ permutation modules 
and one invariant, a permutation module.
We conclude $\mathsf T$ is linearizable.
\end{rema}

\begin{rema}
\label{rema:stable}
By \cite[Remark 5.5]{FR}, for {\em odd} $n$, every form of $\ocM_{0,n}$
over a nonclosed field $F$
is an $F$-rational variety. {\em A priori}, this does {\em not} imply that $\ocM_{0,n}$ is (stably) linearizable for $\fS_n$. 
However, this does imply that $M$ is a stable permutation module, for the $\fS_n$-action.

For $n$ {\em odd}, we have 
\begin{equation}
    \label{eqn:split}
M\simeq N\oplus Q,
\end{equation}
as $\fS_n$-modules, by Proposition~\ref{prop:stable}. 
Since $N$ is a permutation module, for all $n$, and $M$ a stably permutation module, for odd $n$, we see that $Q$ is also stably permutation, for odd $n$. Thus,   
the $\fS_n$-action on 
$\ocM_{0,n}$ is stably linearizable, by Theorem~\ref{thm:stable}.  

The splitting \eqref{eqn:split} 
can also be seen explicitly:
Recall that under the Kapranov basis, $Q=M/N$ is generated by the image of the classes 
$$
H,\quad E_i,\quad i=1,\ldots,n-1
$$
in $M$ under the projection modulo $N$. The $\bZ$-linear map
$$
s: Q\to M,
$$
given on these generators by
$$
H\mapsto H+\sum_{\substack{I\subset\{1,\ldots,n-1\},\\|I|=\frac{n-1}2,\ldots,n-4.}}(|I|-1)\cdot E_I,
\quad \quad 
E_i\mapsto E_i+\sum_{\substack{I\subset\{1,\ldots,n-1\}, i\in I,\\|I|=\frac{n-1}2,\ldots,n-4.}}E_I.
$$
is a section of the exact sequence~\eqref{eqn:NQ}. We check that it is $\fS_n$-equivariant. Let $\tau=(1,2)$ and $\sigma=(1,\ldots,n)$. In $Q$, one has 
$$
H=D_{12}+\sum_{i=3}^nE_i
$$
and $\tau(H)=H$, $\tau(E_1)=E_2$, $\tau(E_2)=E_1$ and $\tau(E_i)=E_i$. 
Note that $s$ is $\tau$-equivariant by construction. 
Next, observe 
\begin{align*}
    s\sigma(H)&=s\left(\sigma \left(D_{12}+\sum_{i=3}^nE_i\right)\right)
    =s\left((n-3)H-(n-4)\sum_{i=2}^{n-1}E_i\right)\\
    &=(n-3)H-(n-4)\sum_{i=2}^{n-1}E_i-\sum_{\substack{I\subset\{1,\ldots,n-1\}, 1\notin I,\\|I|=\frac{n-1}{2}, \ldots, n-4.}}(n-|I|-3)\cdot E_{I}\\
    &\phantom{=}+\sum_{\substack{I\subset\{1,\ldots,n-1\}, 1\in I,\\|I|=\frac{n-1}{2}, \ldots, n-4.}}(|I|-1)\cdot E_{I}.
\end{align*}
\begin{align*}
    \sigma s(H)&=\sigma\left(H+\sum_{|I|=\frac{n-1}{2}, \ldots, n-4}\left(|I|-1\right)\cdot E_{I}\right)\\
    &=\sigma\left(D_{n-2,n-1}+\sum_{\substack{I\subset\{1,\ldots, n-3\},\\|I|=1,\ldots,n-4.}}E_{I}+\sum_{|I|=\frac{n-1}{2}, \ldots, n-4.}\left(|I|-1\right)\cdot E_{I}\right)\\
    &=E_{n-1}+\sum_{\substack{I\subset\{2,\ldots,n-2\}\\|I|=1, \ldots, n-5}}D_{I\cup \{n-1,n\}}+\sum_{i=2}^{n-2}D_{1,i}\\
    &\phantom{=}+\underbrace{\sum_{\substack{I\subset\{2,\ldots,n-1\},\\|I|=\frac{n-1}{2}-1, \ldots, n-5.}}E_{\{1\}\cup I}+\sum_{\substack{I\subset\{2,\ldots,n-1\},\\|I|=2, \ldots, \frac{n-1}{2}-1.}}(n-3-|I|)E_{I}}_{:=A}\\   
    &=(n-3)H-(n-4)\sum_{i=2}^{n-1}E_i-\sum_{\substack{I\subset\{2,\ldots,n-1\}, i \notin I,\\|I|=1,\ldots,n-4.}}E_I\\
    &\phantom{=}+\sum_{\substack{I\subset\{2,\ldots,n-2\},\\|I|=1, \ldots, n-5.}}D_{I\cup \{n-1,n\}}+A.
\end{align*}
One can then verify $ \sigma s(H)=s\sigma(H)$ by comparing the coefficients of each generator $E_I$. To check actions on $E_i$, for $i=1,\ldots,n-2$, one has 
\begin{align*}
    s\sigma(E_i)&=s(H-\sum_{\substack{k=2,k\ne i+1}}^{n-1} E_k)\\
    &=H-\sum_{\substack{k=2,k\ne i+1}}^{n-1} E_k-\sum_{\substack{I\subset\{1,\ldots,n-1\},\\1,i+1\notin I,\\|I|=\frac{n-1}{2},\ldots,n-4.}}E_I
    \phantom{=}+\sum_{\substack{I\subset\{1,\ldots,n-1\},\\1,i+1\in I,\\|I|=\frac{n-1}{2},\ldots,n-4.}}E_I.
\end{align*}
On the other hand, 
\begin{align*}
    \sigma s(E_i)&=\sigma(E_i+\sum_{\substack{I\subset\{1,\ldots,n-1\}, i\in I,\\|I|=\frac{n-1}{2},\ldots,n-4.}}E_{I})\\
    &=H-\sum_{\substack{I\subset\{2,\ldots,n-1\},i+1\notin I,\\|I|=1,\ldots,n-4}.}D_{I\cup\{n\}}+\sum_{\substack{I\subset\{2,\ldots,n-1\},\\|I|=\frac{n-1}2-1,\ldots,n-5.}}D_{I\cup\{1,n\}}\\
    &\phantom{=}+\sum_{\substack{I\subset\{2,\ldots,n-1\},i+1,\notin I\\|I|=2,\ldots,\frac{n-1}2-1.}}E_I.
\end{align*}
Similarly, one can check $\sigma s(E_i)=s\sigma(E_i)$ for $i\ne n-1$ by comparing the coefficients. Finally, one can verify
$$
s(\sigma(E_{n-1}))=s(E_{1})=\sigma(s(E_{n-1})).
$$
\end{rema}

\section{Computing cohomology}
\label{sect:comp}

In this section, we 
study the $G$-module 
$$
M=\Pic(\ocM_{0,n}), 
$$
and the quotient 
$
Q=M/N,
$
from \eqref{eqn:NQ}, for various $G\subset \fS_n$. 

\subsection*{Cohomological criteria}
We focus on two properties, which are necessary for linearizability
of a regular $G$-action on a smooth projective rational variety $X$, see, e.g.,  \cite[Proposition 2.5]{BogPro}:
\begin{itemize}
    \item[{\bf (H1) }] For all subgroups $G'\subset G$ one has 
    $$
    \rH^1(G', \Pic(X)) =\rH^1(G',\Pic(X)^*)=0.
    $$
    \item[{\bf (SP) }] The $G$-module $\Pic(X)$ is stably permutation. 
\end{itemize}
Since $\rH^1$ vanishes on permutation
modules, ${\bf (SP)}$ implies {\bf (H1)}, but the converse does not hold, in general. Computationally, it is easier to check {\bf (H1)}.

\begin{exam}
\label{exam:n=6}   
For $n=6$ and $G\subseteq \fS_6$, property {\bf (H1)} for the action on 
$M=\Pic(\ocM_{0,6})$ 
does not imply {\bf (SP)}, e.g., for the action of 
$$
G\simeq C_2\times C_4 :=\langle
(3, 4), (1, 2, 5, 6)
\rangle, 
$$
and 
$$
 G\simeq (C_2)^3 :=\langle   (1, 5)(2, 6), (3, 4),
    (1, 2)(5, 6)
\rangle,
$$
see the analysis in \cite[Section 6]{CTZ-segre}, as well as \cite[Section 4]{kun}. 
Furthermore, there are $G\subset \fS_6$
such that 
\begin{itemize}
\item 
$Q$ fails {\bf(H1)} but $M$ satisfies it, e.g., for $G=\langle (1,2)(3,4) (5,6)\rangle$, one has
$$
\rH^1(G,M)=0,\quad \rH^1(G,Q)=\bZ/2.
$$
Actually, $M$ is a permutation module while $Q$ is not. Indeed, under appropriate choices of basis, $M$ is of the form
$$
\bZ^4\oplus\bZ[C_2]^6, 
$$
and $Q$ is of the form
$$
\bZ\oplus\bZ[C_2]^2\oplus\bZ[e],
$$
where $G$ acts on $e$ via $-1$.
    \item Both $Q$ and $M$ fail {\bf (H1)}:
    all groups containing $G=C_2^2$
    from Proposition~\ref{prop:cohomo},
    in these cases we have 
    $$
    \rH^1(G,M)=\rH^1(G,Q)=\bZ/2.
    $$
\end{itemize}
\end{exam}


\subsection*{Statement of results}
\begin{prop}\label{prop:cohomo}
For $n_1,n_2,n_3 \in \bN$ with
$2(n_1+n_2+n_3)=n$
let 
$$
\iota_1= (1,2)\ldots (2n_1-1,2n_1)
(2(n_1+n_2)+1,2(n_1+n_2)+2)\ldots (n-1,n),
$$
$$
\!\!\iota_2= (2n_1+1,2n_1+2),\ldots,(2(n_1+n_2)-1,2(n_1+n_2))\ldots
(n-1,n),
$$
and put $G:=\langle \iota_1,\iota_2\rangle$. 
Then 
$$
\rH^1(G,M)=\bZ/2.
$$
\end{prop}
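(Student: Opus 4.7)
The plan is to exploit the exact sequence $0 \to N \to M \to Q \to 0$ of (\ref{eqn:NQ}). The first observation is that $N$ is a permutation $\fS_n$-module, freely generated as an abelian group by the boundary divisors $D_I$ with $|I|, |I^c| \neq 2$; hence $\rH^1(G, N) = 0$ by Shapiro's lemma, and the long exact sequence gives
$$
0 \to \rH^1(G, M) \to \rH^1(G, Q) \xrightarrow{\ \delta\ } \rH^2(G, N).
$$
Thus it suffices to compute $\rH^1(G, Q)$ and to verify that $\delta = 0$.

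For the first task I would use $0 \to Q \to P \to \bZ/2 \to 0$ with $P = \bZ[\fS_n/\fS_{n-1}]$. Since $P$ is permutation, $\rH^1(G, P) = 0$, and we obtain $\rH^1(G, Q) = \coker\bigl(P^G \to (\bZ/2)^G\bigr)$, the map sending the characteristic function of a $G$-orbit $\cO$ to $|\cO| \bmod 2$. A direct analysis of $\iota_1, \iota_2$ shows $G$ has exactly $n_1 + n_2 + n_3$ orbits on $\{1, \ldots, n\}$---the pairs in the first block swapped by $\iota_1$ alone, those in the middle block swapped by $\iota_2$ alone, and those in the last block swapped by both---all of size $2$. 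The image in $\bZ/2$ is therefore zero and $\rH^1(G, Q) = \bZ/2$, generated by the cocycle $c(\iota_1) = g_1 - g_2$, $c(\iota_2) = 0$.

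To prove $\delta(c) = 0$, I would construct an explicit lift $\tilde c: G \to M$. Fix an index $c_0$ in the middle block (necessarily $\iota_1$-fixed) with $\iota_2$-partner $c_0' := \iota_2(c_0)$, and set $m_1 := D_{\{1, c_0\}} - D_{\{2, c_0\}}$. Since $\iota_1$ swaps $1, 2$ and fixes $c_0$, one has $\iota_1 m_1 = -m_1$. Keel's cross-ratio relation in $\Pic(\ocM_{0,n})$ applied to the quadruple $\{1, 2, c_0, c_0'\}$ then gives the key identity
$$
m_1 - \iota_2 m_1 = R_2 - R_1 \ \in \ N,
$$
where $R_1, R_2$ are the ``interior'' contributions (with $|I| \ge 3$) from the two sides of the relation, interchanged by both $\iota_1$ and $\iota_2$. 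It remains to exhibit $m_2 \in N$ satisfying $\iota_2 m_2 = -m_2$ and $(1 - \iota_1) m_2 = R_2 - R_1$; in the base case $n_1 = n_2 = n_3 = 1$ one verifies directly that $m_2 := D_{\{1, c_0', \ell\}} - D_{\{1, c_0, \ell'\}}$ works for the last-block pair $\{\ell, \ell'\}$, and an analogous pairing of summands of $R_1$ and $R_2$ handles the general $(n_1, n_2, n_3)$. Setting $\tilde c(\iota_j) := m_j$ yields a $1$-cocycle $G \to M$ lifting $c$, so $\delta = 0$ and $\rH^1(G, M) = \bZ/2$.

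The main obstacle is the combinatorial construction of $m_2$ for arbitrary $(n_1, n_2, n_3)$ and the verification of the final cocycle identity $m_1 + \iota_1 m_2 = m_2 + \iota_2 m_1$ (which amounts to another application of the same cross-ratio relation). The contrast with Remark~\ref{rema:7}---where the analogous lift fails for a single $C_2$-subgroup such as $\langle \iota_1 \iota_2 \rangle$ with $n_3 = 0$, and $\rH^1$ vanishes on $M$ despite being $\bZ/2$ on $Q$---shows that the success of the lift here genuinely exploits both commuting involutions of $G = C_2^2$.
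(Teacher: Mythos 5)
Your strategy is sound and genuinely different from the paper's. The paper never touches the connecting map $\delta$: it sets $\sigma=\iota_1\iota_2$, $\tau=\iota_2$, splits $M=L\oplus P$ according to whether $n-1\in I$, and runs inflation--restriction for $\langle\sigma\rangle\lhd G$, reducing everything to an explicit eigenbasis computation of the $\tau$-action on $Q^\sigma$ (one $(-1)$-eigenvector survives). Your route instead computes $\rH^1(G,Q)=\coker(P^G\to\bZ/2)=\bZ/2$ directly from $0\to Q\to P\to\bZ/2\to 0$ (correct: every $G$-orbit on $\{1,\ldots,n\}$ has size $2$) and then kills $\delta$ by an explicit cocycle lift through the Keel relations. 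What your approach buys is transparency about \emph{why} the class survives in $M$ for this $C_2^2$ but dies for a single involution (Remark~\ref{rema:7}): the lift genuinely uses both generators. What it costs is the combinatorial construction of $m_2$, which you correctly identify as the open step.

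That step does close, and here is the missing observation. Write the Keel relation for $(1,c_0,2,c_0')$ as $m_1-\iota_2 m_1=R_3-R_1$, where $R_1=\sum D_I$ over partitions with $1,c_0\in I$, $2,c_0'\notin I$, $3\le|I|\le n-3$, and $R_3$ is the analogous sum with $c_0,c_0'$ exchanged. Both $\iota_1$ and $\iota_2$ interchange the two index sets, so $\sigma=\iota_1\iota_2$ preserves each; a $G$-orbit on these partitions has size $2$ exactly when $\sigma I=I^c$ (note $\sigma I=I$ is impossible since $2=\sigma(1)\notin I$). Because $\sigma$ fixes the last block $\{2(n_1+n_2)+1,\ldots,n\}$ pointwise and $n_3\ge 1$, the point $n$ lies in $\sigma I$ iff it lies in $I$, so $\sigma I=I^c$ never occurs and \emph{every} orbit has size $4$: $\{I,\sigma I,\iota_1 I,\iota_2 I\}$ with $I,\sigma I$ indexing $R_1$ and $\iota_1 I,\iota_2 I$ indexing $R_3$. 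On such an orbit set $x_I:=D_{\iota_1 I}-D_{\sigma I}$; then $(1-\iota_1)x_I=D_{\iota_1 I}+D_{\iota_2 I}-D_I-D_{\sigma I}$ and $(1+\iota_2)x_I=0$, and $m_2:=\sum x_I$ over orbit representatives satisfies both required identities (for a size-$2$ orbit the analogous linear system forces coefficient $-\tfrac12$, which is exactly how the lift would fail). Your final ``cocycle identity'' is then automatic: $(1-\iota_1)m_2=R_3-R_1=(1-\iota_2)m_1$ \emph{is} the relation $m_1+\iota_1 m_2=m_2+\iota_2 m_1$, not a further application of Keel. With that inserted, your proof is complete and correct.
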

The first part of Theorem~\ref{thm:main} follows:
\begin{coro}
\label{coro:linear}
    For every even $n>5$ and every subgroup of $\fS_n$ containing $G$, the induced action on $\ocM_{0,n}$ is not stably linearizable. 
\end{coro}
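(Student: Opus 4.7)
The plan is to deduce the corollary directly from Proposition~\ref{prop:cohomo} by invoking the cohomological obstruction \textbf{(H1)} recalled in Section~\ref{sect:comp}. The content of the corollary is a simple inheritance statement: nonvanishing of $\rH^1$ for a single subgroup rules out stable linearizability of every overgroup action. So the proof will essentially be a one-line application of the relevant obstruction, with the key bookkeeping being the restriction of a Galois-module cohomology class along a subgroup inclusion.

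In detail, I would proceed as follows. Fix even $n \geq 6$ and any subgroup $H\subseteq \fS_n$ with $G\subseteq H$, where $G=\langle\iota_1,\iota_2\rangle\cong C_2^2$ is the subgroup of Proposition~\ref{prop:cohomo}. The $H$-action on $\ocM_{0,n}$ is induced from the identification $\Aut(\ocM_{0,n})=\fS_n$ (Bruno's theorem, recalled in Section~\ref{sect:intro}), and it is regular and generically free on the smooth projective rational variety $\ocM_{0,n}$. Suppose, for contradiction, that this $H$-action is stably linearizable. By the cohomological criterion stated in Section~\ref{sect:comp} (see \cite[Corollary 2.5.2]{BogPro} and \cite[Proposition 2.5]{BogPro}), property \textbf{(H1)} must hold for the $H$-action, i.e.
\[
\rH^1\bigl(H',\,\Pic(\ocM_{0,n})\bigr) = 0
\]
for every subgroup $H'\subseteq H$. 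Applying this to the particular subgroup $H'=G\subseteq H$ forces $\rH^1(G,M)=0$, where $M=\Pic(\ocM_{0,n})$.

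This contradicts Proposition~\ref{prop:cohomo}, which asserts $\rH^1(G,M)=\bZ/2$. Hence no such $H$-action on $\ocM_{0,n}$ can be stably linearizable, and the corollary follows.

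There is essentially no obstacle here beyond correctly citing the obstruction: the only subtlety worth double-checking is that \textbf{(H1)} for an $H$-action genuinely restricts to subgroups, which is immediate from the fact that the $\rH^1$ in question is computed from the underlying $H$-module $M$ by restriction. Since $M$ does not depend on $H$ but only on the geometry of $\ocM_{0,n}$, the class in $\rH^1(G,M)$ produced by Proposition~\ref{prop:cohomo} persists regardless of how large we take $H$, giving a uniform obstruction for every $H\supseteq G$.
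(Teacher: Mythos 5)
Your proof is correct and follows exactly the route the paper intends: Corollary~\ref{coro:linear} is deduced from Proposition~\ref{prop:cohomo} by invoking the \textbf{(H1)} obstruction of \cite[Corollary 2.5.2]{BogPro}, with the observation that nonvanishing of $\rH^1(G,\Pic(\ocM_{0,n}))$ obstructs stable linearizability for every overgroup $H\supseteq G$ since the restriction of the $H$-module $\Pic(\ocM_{0,n})$ to $G$ is independent of $H$. Nothing is missing; this matches the paper's (essentially one-line) argument.
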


For example, when $n_1=n_2=n_3=1$
$$
\iota_1=(12)(56), \quad \iota_2=(34)(56),
$$
and the corresponding action on $\ocM_{0,6}$, which is $\fS_6$-equivariantly birational to the Segre cubic, is not stably linearizable.

We apply the results above to rationality questions
over nonclosed fields, completing the proof of 
Theorem~\ref{thm:main}:
\begin{theo}
\label{thm:biquad}
Let $F$ be a field admitting a biquadratic extension. Then, for all even $n\ge 6$ there exist forms of $\ocM_{0,n}$ over $F$ that are not retract rational, and thus not stably rational, over $F$.    
\end{theo}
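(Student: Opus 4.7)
The plan is to transfer the group-cohomological obstruction of Proposition~\ref{prop:cohomo} to a Galois-theoretic obstruction by a direct descent argument, as foreshadowed in the discussion following Theorem~\ref{thm:main}.

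Fix an even $n\ge 6$ and choose positive integers $n_1,n_2,n_3$ with $2(n_1+n_2+n_3)=n$; for example $n_1=n_2=1$ and $n_3=(n-4)/2$ (noting that $n=6$ forces $n_1=n_2=n_3=1$). By Proposition~\ref{prop:cohomo}, the subgroup $G=\langle\iota_1,\iota_2\rangle\subset\fS_n$ is isomorphic to $C_2\times C_2$ and satisfies $\rH^1(G,M)=\bZ/2$, where $M=\Pic(\ocM_{0,n})$ is the geometric Picard module equipped with its $\fS_n$-action.

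Let $L/F$ be a biquadratic extension and fix an isomorphism $\Gal(L/F)\xrightarrow{\sim}G$. Inflating to the absolute Galois group yields a continuous representation
\[
\rho:\Gal(F^{\mathrm{sep}}/F)\twoheadrightarrow\Gal(L/F)\xrightarrow{\sim}G\hookrightarrow\fS_n=\Aut(\ocM_{0,n,\overline F}).
\]
Galois descent via $\rho$ then produces a smooth projective $F$-form $X:={}^{\rho}\ocM_{0,n}$ whose base change to $\overline F$ recovers $\ocM_{0,n,\overline F}$ and on whose geometric Picard group the absolute Galois action factors through the prescribed $G$-action on $M$. This is the arithmetic analogue of the twisting construction used throughout the paper (cf.\ Proposition~\ref{prop:yields}).

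The last step is the cohomological obstruction. If $X$ were retract rational over $F$, then by the flasque resolution formalism of \cite{CTS87} (see also \cite[Prop.~3]{BCTSSD}), the Galois module $\Pic(X_{\overline F})$ would be a direct summand of a permutation module; in particular $\rH^1(H,\Pic(X_{\overline F}))=0$ for every closed subgroup $H\subseteq\Gal(F^{\mathrm{sep}}/F)$. Taking $H=\ker(\rho)$ identifies this group with $\rH^1(G,M)=\bZ/2$, contradicting Proposition~\ref{prop:cohomo}. Since stable rationality implies retract rationality, $X$ is also not stably rational over $F$. The main subtlety is verifying that the Galois action on $\Pic(X_{\overline F})$ really does factor through the prescribed $G$-action on $M$ under twisting; this is standard but must be spelled out so that the equivariant cohomology input of Proposition~\ref{prop:cohomo} is applied to the correct module.
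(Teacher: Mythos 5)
Your proposal follows the paper's own proof almost verbatim: take the $C_2^2$ from Proposition~\ref{prop:cohomo}, identify it with $\Gal(L/F)$ for a biquadratic extension $L/F$, twist $\ocM_{0,n}$ by the resulting homomorphism $\rho$, and invoke the \textbf{(H1)}-obstruction to retract rationality. The setup and the descent step are fine. The one step that fails as written is the choice of subgroup in the final contradiction: you take $H=\ker(\rho)$, but $\ker(\rho)$ acts \emph{trivially} on the torsion-free module $\Pic(X_{\overline F})$, so $\rH^1(\ker(\rho),\Pic(X_{\overline F}))=\Hom_{\mathrm{cont}}(\ker(\rho),\bZ^r)=0$; this subgroup detects nothing, and in particular it is not identified with $\rH^1(G,M)$. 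The subgroup you want is the full absolute Galois group $\Gamma_F=\Gal(F^{\mathrm{sep}}/F)$ itself: since the action on $\Pic(X_{\overline F})$ factors through the finite quotient $G$ and the restriction term $\rH^1(\ker(\rho),\Pic(X_{\overline F}))$ vanishes as just noted, inflation--restriction gives
\begin{equation*}
\rH^1(\Gamma_F,\Pic(X_{\overline F}))\;\simeq\;\rH^1(G,M)\;=\;\bZ/2,
\end{equation*}
and this nonvanishing $\rH^1$ is what contradicts the invertibility of $\Pic(X_{\overline F})$ forced by retract rationality. With that correction the argument is complete and coincides with the paper's.
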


In particular, this yields nonrational forms over $F=\bC(t)$, a field with trivial Brauer group. 

\begin{proof}
    Indeed, let $G\simeq C_2^2$ be the group identified in Proposition~\ref{prop:cohomo}, with $\rH^1(G,\Pic(\ocM_{0,n})) = \bZ/2.$
    Let $\Gamma=\Gal(F'/F)$ be the Galois group of the biquadratic extension $F'/F$. 
    We construct a form $X$ of  $\ocM_{0,n}$ over $F$ such that $\Gamma$ acts on $\Pic(\overline{X})  = \Pic(\ocM_{0,n})$ via $G$.
    This gives an {\bf (H1)}-obstruction to retract rationality. 
\end{proof}

\subsection*{Proof of Proposition~\ref{prop:cohomo}}

Put
$$
\sigma:=\iota_1\iota_2=(1,2)\cdots(2(n_1+n_2)-1,2(n_1+n_2)),
$$
$$
\tau:=\iota_2=(2n_1+1,2n_1+2)\cdots(n-1,n), 
$$
so that $G=\langle\sigma,\tau\rangle$.
We will repeatedly use the inflation-restriction exact sequence 
\begin{equation}
\label{eqn:res}
0\to \rH^1(\langle\tau\rangle, A^\sigma) \to \rH^1(G,A)\to 
\rH^1(\langle\sigma\rangle, A)^\tau,
\end{equation}
with the usual notation for invariants under the actions of $\sigma,\tau$.

\

\noindent
{\em Step 1.} 
Observe that $M$ admits a decomposition, as a $G$-module, 
$$
M=L\oplus P,
$$
where $L$ consists of $\bZ$-linear combinations of $H$ and $E_I$, with $n-1\notin I$, and $P$ is generated, over $\bZ$, by $E_I$ with $n-1\in I$. We have
$$
\rH^1(G,M)=\rH^1(G,L)\oplus \rH^1(G,P).
$$

\

\noindent
{\em Step 2.}
The involution $\sigma$ is contained in $\fS_{n-1}$, permuting $(n-1)$ points and therefore linearizable.
Thus
$$
\rH^1(\langle \sigma\rangle,M) =\rH^1(\langle\sigma\rangle,L)=\rH^1(\langle\sigma\rangle,P)=0.
$$
Moreover, $P$ is a $G$-permutation module. Indeed, for $I$ with $n-1\in I$, $\sigma E_I=E_{\sigma (I)}\in P$, and $\tau E_I=E_{(\tau\cdot(n-1,n))(I)}\in P$.
It follows that 
$$
\rH^1(G,P) = 0,
$$
and
$$
\rH^1(G,M) =\rH^1(G,L) = \rH^1(\langle \tau\rangle, L^\sigma).
$$

\begin{rema}
Geometrically, cohomology is already contributed on the {\em toric model} $\oL_n$, 
obtained by blowing up $(n-2)$ general points on $\bP^{n-3}$. 
\end{rema}

\

\noindent
{\em Step 3.}
Let $N\subset L$ be the submodule of $\bZ$-linear combinations of $E_I$ with $|I|\geq 2$ and $n-1\notin I.$ We have a short exact sequence 
$$
0\to N\to L\to Q\to 0,
$$
of $G$-modules, with $Q$ generated by $H,E_1,\ldots, E_{n-2}$, modulo $N$, and the corresponding long exact sequence of $\langle \tau\rangle$-modules:
$$
0\to N^\sigma\to L^\sigma\to Q^\sigma\to \rH^1(\langle\sigma\rangle,N)\to\ldots
$$
Since 
$
\sigma(E_I)=E_{\sigma(I)},
$
the $\sigma$-action on $N$ yields naturally a permutation module, realized via permutation of indices of $E_I$. 
So 
$$
\rH^1(\langle\sigma\rangle,N)=0.
$$
The short exact sequence 
$$
0\to N^\sigma\to L^\sigma\to Q^\sigma\to 0
$$
gives rise to the long exact sequence
\begin{align}\label{eqn:longexactM'}
    \rH^1(\langle \tau\rangle,N^\sigma)\to \rH^1(\langle \tau\rangle,L^\sigma)\to \rH^1(\langle \tau\rangle,Q^\sigma)\to \rH^2(\langle \tau\rangle,N^\sigma).
\end{align}


\

\noindent
{\em Step 4.}
   The $\langle\tau\rangle$-module $N^\sigma$ has the form:
   $$
   N^\sigma = \bZ[\langle\tau\rangle] \oplus \cdots \oplus \bZ[\langle\tau\rangle].
   $$
   In particular,
   $$
   \rH^1(\langle \tau\rangle,N^\sigma)=\rH^2(\langle \tau\rangle,N^\sigma)=0.
   $$
 Indeed, a $\bZ$-basis of  $N^\sigma$ is given by 
    $$
    e_I:=\begin{cases}
        E_{I}+E_{\sigma(I)}&\text{if }\,\,\sigma(I)\ne I,\\
        E_{I} & \text{if }\,\,\sigma(I)=I,
    \end{cases}
    $$
    for
    $$
    I\subset\{1,2,\ldots, n-2\}, \quad 2\leq|I|\leq n-4.
    $$
   To show that $N^\sigma$ is a direct sum of copies of $\bZ[\langle\tau\rangle]$, it suffices to show that $\tau(e_I)=e_{I'}$, for some $I'\ne I$ and $e_I\ne e_{I'}$.  Observe that
   $$
   \sigma(I)^c=\sigma(I^c),\quad   I^c:=\{1,\ldots,n-2\}\setminus I.
   $$
   There are three cases:
   \begin{itemize}
       \item If $\sigma(I)=\tau(I)=I$, then
       $$
       \tau(e_I)=\tau(E_I)=D_{I\cup \{n-1\}}=E_{I^c}=e_{I^c}
       $$ 
       and thus $e_I\ne e_{I^c}$.
       \item If $\sigma(I)\ne I $ and $\tau(I)=I$,
       then
       \begin{align*}
\tau(e_I)&=\tau(E_I)+\tau(E_{\sigma(I)})=D_{I\cup \{n-1\}}+D_{\sigma(I)\cup \{n-1\}}\\
&=E_{I^c}+E_{\sigma(I)^c}=E_{I^c}+E_{\sigma(I^c)}=e_{I^c}.
\end{align*}
Since $I^c\ne I$ and $I^c\ne\sigma(I^c)$, we know  that $e_I\ne e_{I^c}$.
\item If $\tau(I)\ne I$, then $\sigma(I)\ne I$, and
\begin{align*}
\tau(e_I)&=E_{\tau(I)^c}+E_{(\tau\sigma(I))^c}=E_{\tau(I)^c}+E_{(\sigma\tau(I))^c}\\
&=E_{\tau(I)^c}+E_{\sigma(\tau(I)^c)}=e_{\tau(I)^c}.
\end{align*}
To be concrete, assume that $1\in I$ and $2\notin I$. Then $1\in\tau(I)^c$ and $1\notin\sigma(I)$, so that $\tau(I)^c\ne\sigma(I)$. Since $|I|\geq 2$, one can see that $\tau(I)^c\ne I$ and thus $e_{\tau(I)^c}\ne e_I$.
   \end{itemize}
In conclusion, $\tau(e_I)\ne e_{I}$, in all cases, and $N^\sigma$ is as claimed, and thus has vanishing first and second cohomology.
It follows that 
$$
\rH^1(\langle\tau\rangle,M^\sigma)=
\rH^1(\langle\tau\rangle,L^\sigma)=
\rH^1(\langle \tau\rangle,Q^\sigma).
$$

\noindent
{\em Step 5.}
To show that $\rH^1(\langle \tau\rangle,Q^\sigma)=\bZ/2$, 
let
$$
\Sigma_i:=\sum_{|I|=i} E_I,
$$ 
where the sum is over $I\subseteq\{1,2,\ldots,n-2\}$ with $|I|=i$. Put 
$
\Sigma:=\Sigma_1
$
and set
\begin{align*}
    &e_0 :=H-\Sigma, & 
    \\
    &e_i:=H-\Sigma+(E_{2i-1}+E_{2i}), &1\leq i \leq n_1+n_2,
    \\
    &w_j:=E_{2j-1}, &n_1+n_2+1\leq j \leq \frac{n-2}{2},
    \\
    &v_j:=H-\Sigma +E_{2j}, &n_1+n_2+1\leq j \leq \frac{n-2}{2}.
\end{align*}
    Then 
    $$
    \{e_i,w_j,v_j\}
    $$ 
    for $0\leq i\leq n_1+n_2$ and $n_1+n_2+1\leq j\leq\frac{n-2}{2}$ gives a $\bZ$-basis of $Q^\sigma$. Moreover, for $1\leq i\leq n_1+n_2$ and $n_1+n_2+1\leq j\leq\frac{n-2}{2}$, one has
    $$
    \tau(e_0)=-e_0,\quad \tau(e_i)=e_i,\quad \text{ and } \quad\tau(w_j)=v_j.
    $$
Indeed, $Q^\sigma$ is generated, over $\bZ$, by 
$$
H, (E_1+E_2), \ldots, (E_{2(n_1+n_2)-1}+ E_{2(n_1+n_2}),  E_{2(n_1+n_2)+1}, \ldots, E_{n-2}.
$$
We now show that $\{e_i,w_j,v_j\}$ gives another basis.
First, observe that
  \begin{align*}
      H-\Sigma=D_{34\ldots n}-(E_1+E_2)+\underbrace{\sum_{1,2\notin I, E_I\in N} E_I}_{\in N^\sigma}.
  \end{align*}
  Indeed, if $1,2\notin I$ and $E_I\in N$, $1,2\notin\sigma(I)$ and $E_{\sigma(I)}$ will also appear in the summand. Then  $\sigma(H-\Sigma)=H-\Sigma\pmod {N^\sigma}$ and 
  $$
    e_j,w_j,v_j\in Q^\sigma.
  $$
Moreover, $\{e_j,w_j,v_j\}$ generates $Q^\sigma$ since
  $$
  E_{2i-1}+E_{2i}=e_i-e_0,\quad E_{2j}=v_j-e_0
  $$
  and
  $$
  H=(\frac {4-n}2) e_0+\sum_{i=1}^{n_1+n_2}e_i+\sum_{j=n_1+n_2+1}^{\frac{n-2}2}\left(w_j+v_j\right).
  $$  
  To compute the $\tau$-action on this basis, one can first compute
    \begin{align*}
        H-\Sigma=&D_{34\ldots n}-(E_1+E_2)\pmod {N^\sigma}
        \\\stackrel{\tau}{\longmapsto}& D_{34\ldots n}-D_{1,n-1}-D_{2,n-1}\\
        =&D_{34\ldots n}-2H+2\Sigma-(E_1+E_2)\pmod{N^\sigma}\\
        =&H-\Sigma+(E_1+E_2)-2H+2\Sigma+(E_1+E_2)\pmod{N^\sigma}\\
        =&-H+\Sigma,
    \end{align*}
i.e., 
$$
\tau(e_0)=-e_0.
$$
Then we have
 \begin{align*}
        H-\Sigma+E_{2i-1}+& E_{2i}
        \stackrel{\tau}{\longmapsto} 
         -H+\Sigma+D_{2i-1,n-1}+D_{2i,n-1}\\
        =& -H+\Sigma+2H-2\Sigma+(E_{2i-1}+E_{2i})\pmod{N^\sigma}\\
        =& \,\,\, \,\,\,\,H-\Sigma+(E_{2i-1}+E_{2i}) \quad \quad \quad \quad \quad \,\,\pmod{N^\sigma}.
    \end{align*}
Note that the equalities hold for all $1\leq i\leq \frac n2$. In particular, 
$$
\tau(e_i)=e_i,\quad\text{for}\quad 1\leq i \leq n_1+n_2.
$$
Finally, 
\begin{align*}
        \tau(w_j)=D_{2_j,n-1}
        &=H-\Sigma+E_{2j}-\sum_{\substack{2j\notin I\\E_I\in N}}E_I\\
        &=H-\Sigma+E_{2j}\pmod{N^\sigma},
\end{align*}
 i.e.,
 $$
 \tau(w_j)=v_j,\quad\text{for}\quad n_1+n_2+1\leq j\leq \frac{n-2}{2}.
 $$
 In conclusion, 
 $$
 Q^\sigma=\bZ[e_0]+\sum_{i=1}^{n_1+n_2}\bZ[e_i]+\sum_{j=n_1+n_2+1}^{\frac{n-2}{2}}\bZ[w_j,v_j],
 $$
where $\tau$ acts trivially on $e_i$, permutes $w_j$ and $v_j$, and the unique $(-1)$-eigenvector $e_0$ contributes to 
$$
\rH^1(\langle\tau\rangle,Q^\sigma)=\bZ/2.
$$
This completes the proof of Proposition~\ref{prop:cohomo}.

\begin{rema} \label{rema:H1Qnonzero}
    Notice that when $n_1=n_2=0$, the argument above shows 
$$
\rH^1(C_2,Q)=\bZ/2,
$$
where the $C_2$ is generated by $(1,2)(3,4)\ldots(n-1,n)$. Computational experiments suggest that 
$$
\rH^1(H,M)=0,
$$
for all cyclic subgroups $H\subset \fS_n$.
\end{rema}


\subsection*{Small dimensional examples}

\ 

\noindent $\mathbf{n=6}$:
By Theorem 1 and the analysis in
Section 6 of \cite{CTZ-segre}, we know that 
the $G$-action on $\Pic(\ocM_{0,6})$ satisfies {\bf (SP)} iff 
the $G$-action is linearizable, thus, nonlinearizable actions are not stably linearizable, as they fail {\bf (SP)}.  

\begin{rema}
This indicates an error in the application 
in \cite[p.~295]{HT-torsor}: Proposition 21 there
asserts that the standard and non-standard
actions of $\mathfrak{A}_5$ are stably
birational, contradicting our cohomology
computation. The gap occurs in the sentence:
``However, for any finite group $G$ and 
automorphism $a : G \rightarrow G$, precomposing by $a$ yields an action on $G$-modules; this respects permutation and stably permutation modules.''
\end{rema}

\ 

\noindent $\mathbf{n=8}$:
There is a unique (conjugacy class of) $G'=C_2^2\subset \fS_8$ such that 
$$
\rH^1(G', \Pic(\ocM_{0,8}))=\bZ/2,
$$
and all 
$G\subseteq \fS_8$ failing {\bf (H1)} on $M$ contain $G'$. 
With {\tt magma}, we find: 

\begin{itemize}
\item There are 66 (conjugacy classes of) groups containing this $G'$.
\item Of the remaining 230 classes, 96 are contained in the (unique) $\fS_7\subset \fS_8$, the
corresponding actions are linearizable.
\item After that, there are 56 contained in the (unique) $\fS_6\times C_2$ -- these actions are birational to an action on a 5-dimensional torus; such actions have been analyzed, over nonclosed fields, in \cite{hoshi}.
\item We are left with 78 classes. Applying \cite[Algorithm F4]{hoshi} to these classes, we found at least $37$ classes of groups $G\subset \fS_8$ having vanishing cohomology but with $\Pic(\ocM_{0,8})$ 
failing the {\bf (SP)} condition. 
\item 
Among the 41 remaining classes, 13 leave invariant an odd cycle. These actions are stably linearizable by Proposition~\ref{prop:oddcycle}.

\item There are 28 remaining classes, including a minimal 
$$
C_2^2=\langle (1,2)(3,4)(5,6)(7,8), (1,3)(2,4)(5,7)(6,8)\rangle,
$$
which (up to conjugation) is 
contained in every remaining class. 
The action of this $C_2^2$ on $M$ 
yields a permutation module:
$$
\bZ[C_2^2]^{19}\oplus\bZ[C_2^2/C_2]^3\oplus\bZ[C_2^2/C_2']^3\oplus\bZ[C_2^2/C_2'']^3\oplus\bZ^5.
$$
However, on $Q$, this action fails {\bf (H1)}, and Theorem~\ref{thm:stable} is not applicable to any of these cases. 
\end{itemize}

\ 

\noindent $\mathbf{n=10}$:
We find more minimal groups contributing cohomology: 
$$
\rH^1(G,\Pic(\ocM_{0,10}))=\bZ/2
$$ 
when
\begin{itemize}
  \item $G=C_2^2=\langle (1,2)(3,4)(5,6)(7,8),(1,2)(9,10)\rangle$,
    \item $G=C_2^2=\langle (1,2)(3,4)(5,6),(5,6)(7,8)(9,10)\rangle$,
    \item $G=C_2\times C_4= \langle (3, 6)(8, 10),
    (1, 2)(5, 9),
    (1, 2)(3, 10, 6, 8)(4, 7)\rangle$,
    \item $G=\fD_4=\langle (3, 6)(8, 10),
    (1, 2)(5, 9)(8, 10),
    (1, 2)(3, 10, 6, 8)(4, 7)\rangle.$
\end{itemize}

\section{Three-dimensional case}
\label{sect:arithm}

Next, we give a criterion for rationality of the Segre cubic, exhibit forms failing stable rationality over arbitrary fields admitting a bi-quadratic extension, 
and establish stable rationality, provided $Q$ is stably permutation, for the action of the absolute Galois group.  

Recall that $X_6$ denotes the symmetrically linearized 
GIT quotient with equivalent presentations:
\begin{itemize}
\item{$(\bP^1)^6$ under the diagonal action of $\SL_2$; or}
\item{$\Gr(2,6)$ under the diagonal action of the torus
$\mathsf T \simeq \bG_m^5$.}
\end{itemize}
These have ten isolated nodes, the images of the $D_I, |I|=3$
under the blow down $\beta: \ocM_{0,6} \rightarrow X_6$.
These are classically embedded $X_6 \subset \bP^4$ as
cubic threefolds, known as Segre cubic threefolds \cite{CTZ-segre}. The remaining boundary divisors $D_I,|I|=2$
correspond to planes passing through four nodes.

\begin{theo}
\label{thm:segre-rat}
Let $X$ be a form of the Segre cubic threefold over a nonclosed field $F$ of characteristic zero, and $\tilde{X}$ its standard resolution of singularities, a form of $\ocM_{0,6}$. Then 
$X$ is rational over $F$ if and only if 
the Galois-module $\Pic(\ocM_{0,6})$ 
satisfies ${\bf (SP)}$. 
\end{theo}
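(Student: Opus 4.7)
The plan is to transpose to the Galois-theoretic setting the equivariant analysis of \cite[\S 6]{CTZ-segre}, where for subgroups $G\subseteq \fS_6$ the condition $\mathbf{(SP)}$ on $\Pic(\ocM_{0,6})$ is shown to be equivalent to linearizability of the $G$-action on $X_6$. Here, linearizability in the equivariant sense is replaced by rationality of $X$ over $F$.

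\emph{Necessity (rational $\Rightarrow$ $\mathbf{(SP)}$).} Suppose $X$ is rational over $F$; then so is the smooth model $\tilde X$. I would apply equivariant weak factorization in characteristic zero to the birational equivalence $\tilde X \dashrightarrow \bP^3_F$, factoring it as a finite sequence of blowups and blowdowns along smooth $F$-subvarieties. Each such operation modifies the geometric Picard module $\Pic(\tilde{X}_{\bar F}) = \Pic(\ocM_{0,6})$ by adding or removing a $\Gamma$-permutation summand, indexed by the geometric irreducible components of a Galois-invariant center. Since $\Pic(\bP^3_F) = \bZ$ is itself a (rank-one trivial) permutation module, we conclude that $\Pic(\ocM_{0,6})$ is stably permutation as a $\Gamma$-module, i.e.\ satisfies $\mathbf{(SP)}$.

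\emph{Sufficiency ($\mathbf{(SP)}$ $\Rightarrow$ rational).} Assume $\Pic(\ocM_{0,6})$ is stably permutation. The submodule $N$ in \eqref{eqn:NQ} is a $\Gamma$-permutation module (the $\fS_6$-action permutes the divisors $D_I$ with $|I|\neq 2$), so by \cite[Lemma~1]{CTS77} the quotient $Q$ is stably permutation as well. Theorem~\ref{thm:stable} then yields that $X$ is stably rational over $F$. To upgrade this to rationality, I would use the explicit geometry of the Segre cubic. The condition $\mathbf{(SP)}$ restricts the Galois image in $\fS_6$ to a short list of subgroups, classified in \cite[\S 6]{CTZ-segre}. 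For each such subgroup, I would locate a Galois-invariant geometric structure on $X$ -- typically a $\Gamma$-fixed node (giving rationality via projection), a $\Gamma$-invariant pair among the original $6$ points (giving the Losev-Manin toric model $\oL_6$ by Proposition~\ref{prop:twopointtoric}), or an invariant Segre plane -- that descends to an explicit $F$-rational parameterization of $X$.

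The main obstacle is precisely this upgrade from stable rationality to rationality in the sufficiency direction. It requires a case-by-case combinatorial analysis of the subgroups of $\fS_6$ satisfying $\mathbf{(SP)}$ together with the construction of explicit $F$-rational birational maps; the equivariant versions of these constructions were carried out in \cite{CTZ-segre}, but transferring them to the arithmetic setting requires additional Galois descent arguments, and in toric cases one must verify that the Losev-Manin torus (of rank $3$) with stably permutation character module is actually rational, not merely stably rational, over $F$.
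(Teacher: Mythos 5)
Your necessity direction (rational $\Rightarrow$ \textbf{(SP)}) is fine and is the standard argument; the paper leaves it implicit. The sufficiency direction, however, breaks at its very first step. From the exact sequence $0\to N\to M\to Q\to 0$ with $N$ a permutation module, you cannot deduce that $M$ stably permutation forces $Q$ to be stably permutation: \cite[Lemma~1]{CTS77} runs in the \emph{opposite} direction (the paper uses it to conclude that $M$ is stably permutation \emph{from} the hypothesis that $Q$ is). Indeed, the paper's own Example~\ref{exam:n=6} is a counterexample to your claim: for $G=\langle (1,2)(3,4)(5,6)\rangle\subset\fS_6$ the module $M=\Pic(\ocM_{0,6})$ is an honest permutation module, yet $\rH^1(G,Q)=\bZ/2$, so $Q$ is not stably permutation and the sequence does not split. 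Consequently your appeal to Theorem~\ref{thm:stable} does not go through, and your route does not even yield stable rationality, let alone rationality.

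Even setting that aside, the upgrade from stable rationality to rationality --- which you correctly flag as the main obstacle --- is the entire content of the theorem and is left unproved in your plan. The paper's proof proceeds by importing from \cite[Theorem~1 and \S 6]{CTZ-segre} the classification of subgroups of $\fS_6$ for which $\Pic(\ocM_{0,6})$ is stably permutation (a Galois-invariant node; an action contained in an $\fS_5$; a specific $C_2^2\subset \fS_2\times\fS_4$), and then proves rationality directly in each case: projection from an invariant node, the Kapranov model for the $\fS_5$ case, and, in the $C_2^2$ case, either the Losev--Manin toric model of Proposition~\ref{prop:twopointtoric} combined with the theorem of \cite[Theorem~2]{kun} that a three-dimensional toric variety is rational over $F$ if and only if its Picard module is stably permutation --- precisely the external input that settles the ``stably rational versus rational'' worry you raise for the rank-three torus --- or an explicit birational map to a complete intersection $X_{2,2}\subset\bP^5$ followed by projection from a line through a conjugate pair of nodes. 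Your plan identifies the correct shape of this case analysis but supplies neither the decisive toric input nor the explicit constructions, so as written it does not establish the sufficiency direction.
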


\begin{proof}
This is closely related to the linearizability result \cite[Theorem 1]{CTZ-segre}. The 
group-theoretic analysis there shows that the only
cases where the Galois action on the Picard group is stably permutation
are:
\begin{itemize}
\item{when one of the ten nodes is Galois invariant;}
\item{the Galois action is contained in an 
$\fS_5$-action associated with permutations
of {\em five} of the marked points;}
\item{the Galois group acts via $C_2^2$, leaving 
three planes invariant, and the set of nodes splits into 
a union of five Galois orbits of length two. 
}
\end{itemize}
Note that the first two cases are easily shown
to be rational: Projecting from a node gives
a birational map to $\bP^3$, cf.~Example~\ref{exam:segp3}. 
And when the 
action factors through $\fS_5$, the moduli space
arises via the Kapranov construction, i.e., is
a blow-up of $\bP^3$. 

Recall that in the third case, the Galois action
factors through $\fS_2 \times \fS_4 \subset \fS_6$ corresponding
to a partition of the six points conjugate to 
$$\{1,2,3,4,5,6\} = \{3,4\} \cup \{1,2,5,6\}.$$
Our $C_2\times C_2$ action is conjugate to
$$\left<(34), (15)(26)\right> \subset \fS_6$$
This leaves the boundary divisors $D_{34},D_{15}$, and $D_{26}$
invariant. Identifying singular points
with the boundary divisors in $\ocM_{0,6}$,
the orbits are
\begin{align*}
&\{D_{123}=D_{456},D_{124}=D_{356}\},\quad 
\{D_{125}=D_{346}, D_{156}=D_{234} \},\\
&\{D_{126}=D_{345}, D_{256}=D_{134} \},\quad 
\{D_{135}=D_{246}, D_{145}=D_{236} \},\\
&\{D_{136}=D_{245}, D_{146}=D_{235}\}.
\end{align*}

We emphasize that the invariant divisor
classes reflect boundary divisors defined
over $F$. Indeed, our moduli space has 
$F$-rational smooth points so there is no obstruction to descending Galois-invariant
divisors.  

We claim this moduli space is birational over $F$ to a toric threefold, i.e.,
an equivariant compactification of a 
nonsplit torus over $F$.  

Consider the Losev-Manin moduli space associated to the partition above.
Specifically, points $3$ and $4$ are not
permitted to collide with other points
but points from $\{1,2,5,6\}$ may collide with one another. This is toric by Proposition~\ref{prop:twopointtoric}, i.e.,
the orbits of the homogeneous quartic forms
vanishing along $\{1,2,5,6\}$ modulo the
torus fixing $\{3,4\}$. This geometric description is compatible with the Galois action.

Rationality of three-dimensional toric 
varieties has been settled in 
\cite[Theorem~2]{kun}:
The variety is rational over $F$ iff
the Picard module is stably permutation
for the Galois action. 

Here is an alternative rationality construction: Pick one of the boundary divisors $D_I,|I|=2$ invariant under the
Galois action. With our choice of indexing
this could be $D_{34},D_{15}$, or $D_{26}$;
we take $D_{34}$.  
This corresponds to a plane $P\subset X$
containing four ordinary singularities, 
i.e., the images of $D_{34j}, j=1,2,5,6$.
We blow this plane up -- inducing a 
small resolution of the four singularities
-- and then blow down the proper transform
of the plane. This yields a complete
intersection of two quadrics 
$X_{2,2}\subset \bP^5$ 
with six singularities, the images of
the singularities of $X$ {\em not} 
contained in $P$.  Under the $C_2 \times C_2$
action, we have three orbits each with
two singular points. For each orbit, the line
joining the singularities is contained
in $X_{2,2}$.  Projecting from that line
gives 
$$X_{2,2} \stackrel{\sim}{\dashrightarrow}
\bP^3;$$
the birationality is classical 
cf.~\cite[Proposition~2.2]{CT-quad}.  
\end{proof}

\bibliographystyle{alpha}
\bibliography{coho}
\end{document}